\documentclass[a4paper,reqno]{amsart}
\usepackage{amssymb}
\usepackage{latexsym}
\usepackage{amsmath}
\usepackage{euscript}
\usepackage{graphics,color}
\usepackage[all,cmtip]{xy}
%%%
\usepackage[margin=3cm]{geometry}
\usepackage{MnSymbol} % for \udots
\usepackage{amsmath}
\usepackage{tcolorbox}
\usepackage{mathrsfs}
\usepackage{pgf,tikz,pgfplots}
\pgfplotsset{compat=1.15}
\usetikzlibrary{arrows}
\usepackage{graphicx}

\DeclareMathOperator{\diag}{diag}

\newcommand{\rFs}[5]{\,_{#1}F_{#2} \left( \genfrac{.}{.}{0pt}{}{#3}{#4};#5 \right)}

      \def\dC{{\mathbb C}}

   \def\dN{{\mathbb N}}

  %%% What to use here (for gen. resolvents)?

\newcommand{\be}{\begin{equation}}
\newcommand{\ee}{\end{equation}}
\newcommand{\ba}{\begin{eqnarray}}
\newcommand{\ea}{\end{eqnarray}}
\newcommand{\baa}{\begin{eqnarray*}}
\newcommand{\eaa}{\end{eqnarray*}}
\newcommand{\bb}{}

\newcounter{my}
\newcommand{\he}%
   {\stepcounter{equation}\setcounter{my}%
   {\value{equation}}\setcounter{equation}0%
   }%
\newcommand{\she}%
   {\setcounter{equation}{\value{my}}%
    }%

\DeclareMathOperator\End{ End}
\DeclareMathOperator\rank{ rank}

\newtheorem{theorem}{Theorem}[section]
\newtheorem{proposition}[theorem]{Proposition}
\newtheorem{corollary}[theorem]{Corollary}
\newtheorem{lemma}[theorem]{Lemma}
\newtheorem{definition}[theorem]{Definition}
\theoremstyle{definition}

\newtheorem{remark}[theorem]{Remark}

\numberwithin{equation}{section}

\newcommand{\bra}[1]{\langle\,{#1}}
\newcommand{\ket}[1]{\mid{#1}\,\rangle}

\title[Algebraic Interpretation of discrete MVOPs]{Algebraic Interpretation of discrete families of matrix valued orthogonal polynomials}
\author{Quentin Labriet, Lucia Morey and Luc Vinet}
%\date{May 2024}

\address{Centre de recherches math\'ematiques, Universit\'e de Montr\'eal, P.O. Box 6128, Centre-ville Station, Montr\'eal (Qu\'ebec), H3C 3J7}

\address{Centre de recherches math\'ematiques, Universit\'e de Montr\'eal, P.O. Box 6128, Centre-ville Station, Montr\'eal (Qu\'ebec), H3C 3J7}

\address{IVADO and Centre de recherches math\'ematiques, Universit\'e de Montr\'eal, P.O. Box 6128, Centre-ville Station, Montr\'eal (Qu\'ebec), H3C 3J7}

\begin{document}

\begin{abstract}
An algebraic interpretation of matrix valued orthogonal polynomials (MVOPs) is provided. The construction is based on representations of a ($q$-deformed) Lie algebra $\mathfrak{g}$ into the algebra $\End_{M_n(\dC)}(M)$ of $M_n(\dC)$-linear maps over a $M_n(\dC)$-module $M$. Cases corresponding to the Lie algebras $\mathfrak{su}(2)$ and $\mathfrak{su}(1, 1)$ as well as to the $q$-deformed algebra $\mathfrak{so}_q(3)$ at $q$ a root of unity are presented; they lead to matrix analogs of the Krawtchouk, Meixner and discrete Chebyshev polynomials.
\end{abstract}

\maketitle

\section{Introduction}
The theory of matrix valued orthogonal polynomials (MVOPs for short) was introduced by M.G. Krein in 1949 \cite{Krein1}. Since then, numerous theoretical advancements have been made, closely following the development of scalar valued orthogonal polynomials. A comprehensive overview of these developments can be found in \cite{DamanikPushnitskiSimon08}. In a manner similar to scalar orthogonal polynomials, there is considerable interest in studying families of MVOPs that also exhibit the additional property of being eigenfunctions of second-order differential operators \cite{DurandlI2008_2,DuranG1,GrunbaumPacharoniTirao02,KoelinkRiosRoman17}, difference operators \cite{ALVAREZNODARSE201340,DURAN2012586,EMR}, or $q$-difference operators \cite{ALDENHOVEN2015164,AKR1}.  One method for constructing such examples is by utilizing the representation theory of compact Lie groups and harmonic analysis on compact symmetric spaces. In this context, scalar valued polynomials are known to arise as matrix coefficients or spherical functions. The spherical function approach has been extended to matrix analogues and successfully applied in works such as \cite{GrunbaumPacharoniTirao02,HeckmanVP,KvPR1,KvPR2} among  others. Despite these advancements, there are still relatively few concrete examples of MVOPs, particularly those that satisfy a difference equation. More specifically, our goal is to identify examples of MVOPs along with their recurrence relations, orthogonality relations, and, when applicable, their corresponding differential or difference equations. 

The goal of this article is to provide an algebraic construction for MVOPs as transition coefficients between two specified bases. We adopt the point of view introduced in \cite{GranovskiiZhedanov86} and much developed by one of us often in collaboration with Zhedanov, see in particular the reviews \cite{Genest2014racah}, \cite{De2015bannai}.  Specifically, an approach for providing an algebraic interpretation of the polynomial families in the Askey scheme involves the algebra generated by two operators: the multiplication by the operator multiplication by the variable and the 
$(q)$-difference or differential operator. Depending on the case, the resulting algebra can either be a  Lie algebra (or its $q$ deformation), or a more general quadratic algebra (such as Hahn or Racah) which are specializations of the Askey-Wilson algebra.

In \cite{Floreanini1993quantum,GranovskiiZhedanov86},  scalar valued orthogonal polynomials are obtained as transition coefficients between two eigenbases of two self-adjoint operators $H$ and $P$ inside a Lie algebra. If $H$ admits a discrete spectrum and $P$ acts tridiagonally on the eigenbasis of $H$ then the transition coefficients between both eigenbases can be seen to be orthogonal polynomials with the eigenvalues of $P$ as variables.  This method allows for the uncovering of orthogonality relations, three term recurrence relations, and differential or difference equations satisfied by the polynomials. This known interpretation for scalar orthogonal polynomials, represents a new and intriguing research direction for MVOPs.

To apply such a construction, a natural framework is  $M_n(\dC)$ modules that admits a matrix valued inner product. More precisely, we look at representations of a Lie (and $q$-deformed) algebra $\mathfrak{g}$ into the algebra $\End_{M_n(\dC)}(M)$ of $M_n(\dC)$-linear maps over a left $M_n(\dC)$-module $M$, i.e. Lie algebra morphisms between $\mathfrak{g}$ and $\End_{M_n(\dC)}(M)$. Similarly as in the scalar case, we have two self-adjoint operators $H$ and $P$ with $P$ acting tridiagonally on the eigenbasis of $H$, then MVOPs will be obtained as transition coefficients between both eigenbases. 

In the recent preprint \cite{koelinkromanzudilim24}, the authors emphasized that naturally defined MVOP are close to their scalar valued prototypes. In some sense the present work follows the same philosophy since families of MVOP are obtained naturally from the algebra related to families of scalar valued polynomials and their representations. More precisely, since $M_n(\dC)$ is Morita equivalent to $\dC$ there is an equivalence of categories between $M_n(\dC)$ modules and complex vector spaces. Thus starting from a complex representation of our algebra $\mathfrak{g}$ we get a representation  in $\End_{M_n(\dC)}(V)$.
Using this equivalence, we can unfold some properties of MVOPs starting from the scalar valued case. 

In Section $2$ we present the necessary background for the construction, i.e. Hermitian modules and Morita equivalence.  Section $3$ introduces the general construction for MVOPs seen as transition coefficients. In the last part of the article, we use this algebraic presentation to construct some families of discrete MVOPs together with their orthogonality, recurrence relation and difference equation. This goal is achieved in Section $4$ to $6$, using three different algebras: $\mathfrak{su}(2)$, $\mathfrak{su}(1,1)$ and $\mathfrak{so}_q(3)$. This leads to the construction of, respectively, Krawtchouk, Meixner and Discrete Chebyshev matrix analogues.

\subsection{Notations} In all the paper we use $A^*$ to denote the transpose conjugate of a square matrix $A$. For a symmetric matrix $ A$, we write $A \geq 0$ for non negative definite matrices and $A>0$ for positive definite ones.

%%%%%%%%%%%%%%%%%%%%%%%%%%%%%%%%%
%%%%%%%%%%%%%%%%%%%%%%%%%%%%%%%%%
%%%%%%%%%%%%%%%%%%%%%%%%%%%%%%%%%
%%%%%%%%%%%%%%%%%%%%%%%%%%%%%%%%%

\section{$M_n(\dC)$-module}
\subsection{Morita equivalence}
In this subsection we discuss $M_n(\dC)$-modules (thought as left modules) with matrix valued inner products, that will be called Hermitian modules. First, it is a fact  that the algebras $M_n(\dC)$ and $\dC$ are Morita equivalent, see \cite{Lam99}, i.e. their respective module categories $\mathfrak{M}_{M_n(\dC)}$ and $\mathfrak{M}_{\mathbb{C}}$  are equivalent. The functors realizing this equivalence are given by 
\begin{equation}
F: \mathfrak{M}_{M_n(\mathbb{C})}\to \mathfrak{M}_{\mathbb{C}}, \qquad G: \mathfrak{M}_{\mathbb{C}}\to \mathfrak{M}_{M_n(\mathbb{C})},    
\end{equation}
on the objects, the functors are defined by 
\begin{equation}
    G(V)=V^n, \qquad F(M)=e_{11}M ,
\end{equation}
where $e_{ij}$ denote the elementary matrices. Here $V^n$ is a $M_n(\dC)$-module, the action being given, for $A\in M_n(\dC)$, by:
\begin{equation}\label{eq:MnActionOnVn}
A (v_1,\ldots, v_n) =  \left(\sum_k a_{1k}v_k, \ldots, \sum_n a_{nk}v_k \right). 
\end{equation}
On the morphisms, the functors are defined by 
 \begin{align}
 \alpha&:V\to V', \quad G(\alpha): G(V)\to G(V'),\quad G(\alpha)\cdot(v_1,\ldots,v_n)=(\alpha v_1, \ldots, \alpha v_n),\\
\beta&:M\to M', \quad F(\beta): F(M)\to F(M'),\quad F(\beta)\cdot(e_{11} m)= e_{11}\beta(m).
 \end{align}

We now introduce introduce Hermitian $M_n(\dC)$-modules. This definition is adapted to our purpose from the one in \cite[Dfn II.7.1.1]{Blackadar} for Hilbert modules.  
\begin{definition}
A Hermitian $M_n(\dC)$-module $M$ is a $M_n(\dC)$-module together with a matrix valued inner product, i.e. a map $(\cdot,\cdot): M\times M\to M_n(\dC)$ such that:
\begin{enumerate}
    \item For all $A\in M_n(\dC)$, $ (A m_1,m_2)=A(m_1,m_2)$,
    \item $(m_1,m_2)=(m_2,m_1)^*$,
    \item $(m,m)\geq 0$,
    \item If $(m,m)=0$ then $ m=0$ (we say that the inner product is non degenerate).
\end{enumerate} 
\end{definition}

In general the non degeneracy condition (4) can be omitted but in our case every inner product will satisfy it.  Another remark is that as a direct consequence of the definition we get
\begin{equation}
(m_1,A m_2)=(m_1,m_2)A^*.    
\end{equation}

We will now explicitly extend the Morita equivalence from Hermitian spaces $V$ to Hermitian $M_n(\dC)$-modules $V^n$. Let $(V,\langle \cdot,\cdot \rangle)$ be a Hermitian space and endow the $M_n(\dC)$-module $G(V)=V^n$ with the matrix valued inner product $G(\langle\cdot,\cdot\rangle)=(\cdot,\cdot)_G$ defined by 
\begin{equation}
\label{eq:mv-innerproduct}
    \left( ( v_1,\ldots, v_n),(w_1,\ldots, w_n)\right)_G = (\langle v_i,w_j\rangle) \in M_n(\dC). 
\end{equation}
The first two properties of matrix valued inner products are routine to check. For the third one, we observe that the matrix $(v,v)$ is the Gram matrix of the family $(v_1,\ldots,v_n)$ which is known to be non-negative. The inner product $(\cdot,\cdot)_G$ is non degenerate which is easily seen by looking at the diagonal coefficients.  

In the other direction, starting with a Hermitian $M_n(\dC)$-module $(V^n,(\cdot,\cdot))$, we endow $F\circ V^n$ with the inner product $F((\cdot,\cdot))=\langle \cdot, \cdot\rangle_F$ defined by 
\begin{equation}
    \langle v,w\rangle_F =( (v,0,\ldots,0),(w,0,\ldots,0))_{1,1}.
\end{equation}
We have the following
\begin{equation}
((v,0,\ldots,0),(w,0,\ldots,0))=(e_{11}(v,0,\ldots,0),e_{11}(w,0,\ldots,0))=e_{11}((v,0,\ldots,0),(w,0,\ldots,0))e_{11},
\end{equation}
so that the only non zero coefficient in $((v,0,\ldots,0),(w,0,\ldots,0))$ is the upper left one. This implies that $\langle\cdot,\cdot\rangle_F$ is indeed a non degenerate inner product on $V$. 
We have 
\[
G\circ F( (\cdot,\cdot))=(\cdot,\cdot),\quad F\circ G (\langle \cdot, \cdot\rangle)=\langle \cdot, \cdot\rangle. 
\]
The second equation is direct from the definitions. Regarding the first one, introduce $\sigma_{1j}$ to be the permutation matrix associated with the transposition $(1,j)$ we have
\begin{align}
    ((v_1,\ldots,v_n),(w_1,\ldots,w_n))_{ij}&=\sum_{k,\ell} \left(\sigma_{1k} ((v_k,0,\ldots,0),(w_\ell,0,\ldots,0))\sigma_{1\ell}\right)_{ij}\nonumber\\
    &=((v_i,0,\ldots,0),(w_j,0,\ldots,0))_{11}.
\end{align}
It is then showed to be equal to $G\circ F( (\cdot,\cdot))((v_1,\ldots,v_n),(w_1,\ldots,w_n))_{ij}$. 

As a summary of this discussion we showed a ``unitary'' Morita equivalence (the morphisms of the module categories being the same)
\begin{equation}
G :(V,\langle\cdot,\cdot\rangle) \mapsto (V^n,(\cdot,\cdot)_G),\quad F :(V^n,(\cdot,\cdot))\mapsto (V,\langle\cdot,\cdot\rangle_F).
\end{equation}
To conclude this discussion on the Morita equivalence we define the adjunction on $\End_{M_n(\dC)}(V^n)$ in a natural way .

\begin{definition}
For an operator $T\in \End_{M_n(\dC)}(V^n)$, we define the adjoint of $T$ with respect to  $(\cdot,\cdot)$ by the property
\begin{equation}
    (T\cdot m_1,m_2)=(m_1,T^\dagger \cdot m_2).
\end{equation}
Moreover, we say that $T$ is self-adjoint if $T^\dagger=T$. 
\end{definition}

We will abuse notations and use also ${}^\dagger$ for the usual adjoint on Hermitian spaces. The existence of the adjoint is proved through the operator $G\circ (F\circ T)^{\dagger}$, and the uniqueness is then a consequence of the Morita equivalence. Using the definitions one proves the following lemma. 
\begin{lemma}
    The functors $G$ and $F$ preserve adjoints, i.e. 
    \begin{equation}
    (G\circ T)^\dagger= G\circ T^\dagger, \qquad (F\circ T)^\dagger= F\circ T^\dagger. 
    \end{equation}
\end{lemma}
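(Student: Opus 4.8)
The plan is to invoke the uniqueness of the adjoint, which was just established through the Morita equivalence, so that for each of the two equalities it suffices to verify that the candidate operator ($G(T^\dagger)$, resp. $F(T^\dagger)$) satisfies the defining relation $(S\cdot m_1,m_2)=(m_1,S^\dagger\cdot m_2)$ of the adjoint of $G(T)$ (resp. $F(T)$). For the first identity this is immediate: taking $T$ an operator on the Hermitian space $(V,\langle\cdot,\cdot\rangle)$ and $\mathbf v=(v_1,\dots,v_n)$, $\mathbf w=(w_1,\dots,w_n)$ in $V^n$, the definition of $G$ on morphisms gives $G(T)\cdot\mathbf v=(Tv_1,\dots,Tv_n)$, so by \eqref{eq:mv-innerproduct} the $(i,j)$ entry of $(G(T)\cdot\mathbf v,\mathbf w)_G$ is $\langle Tv_i,w_j\rangle$ while the $(i,j)$ entry of $(\mathbf v,G(T^\dagger)\cdot\mathbf w)_G$ is $\langle v_i,T^\dagger w_j\rangle$. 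The scalar adjoint relation $\langle Tv_i,w_j\rangle=\langle v_i,T^\dagger w_j\rangle$ holds entrywise, the two matrices coincide, and $G(T)^\dagger=G(T^\dagger)$ follows by uniqueness.

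For the second identity the bookkeeping is more delicate. Here $T\in\End_{M_n(\dC)}(V^n)$, and I would first record that $M_n(\dC)$-linearity makes $F(T)$ well defined, since $F(T)(e_{11}m)=e_{11}T(m)=T(e_{11}m)$ is independent of the representative $m$; thus on $F(V^n)=e_{11}V^n$, identified with $V$ via $v\mapsto(v,0,\dots,0)$ (using \eqref{eq:MnActionOnVn} for the $e_{11}$-action), the operator $F(T)$ sends $(v,0,\dots,0)$ to $e_{11}T(v,0,\dots,0)$. Writing $\mathbf v_0=(v,0,\dots,0)$ and $\mathbf w_0=(w,0,\dots,0)$, I would then use property (1) of the matrix-valued inner product together with the elementary identity $(e_{11}M)_{11}=M_{11}$ to compute $\langle F(T)v,w\rangle_F=(e_{11}T\mathbf v_0,\mathbf w_0)_{11}=(e_{11}(T\mathbf v_0,\mathbf w_0))_{11}=(T\mathbf v_0,\mathbf w_0)_{11}$. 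Applying the adjoint relation on $V^n$ rewrites this as $(\mathbf v_0,T^\dagger\mathbf w_0)_{11}$, and the dual computation — now invoking $(m_1,A\cdot m_2)=(m_1,m_2)A^*$, the symmetry $e_{11}^*=e_{11}$, and $(Me_{11})_{11}=M_{11}$ — identifies $(\mathbf v_0,T^\dagger\mathbf w_0)_{11}$ with $\langle v,F(T^\dagger)w\rangle_F$. Uniqueness of the adjoint on the Hermitian space $V$ then yields $F(T)^\dagger=F(T^\dagger)$.

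I expect the only genuine obstacle to lie entirely in this second direction, and to be organisational rather than conceptual: one must check that $F(T)$ really extracts the first component independently of the chosen representative, which is exactly where $M_n(\dC)$-linearity is used, and one must verify that passing from the full matrix-valued inner product to its $(1,1)$ entry is compatible with the left and right multiplications by $e_{11}$ forced by the identification $F(V^n)\cong V$. These $e_{11}$-identities are what guarantee that the off-diagonal data of the matrix inner product does not interfere with the scalar relation; once they are in place, both equalities drop out of the defining adjoint relation and its uniqueness.
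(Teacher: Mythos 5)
Your proof is correct, and for the $G$-identity it is precisely the paper's argument: the entrywise computation $(G\circ T\cdot v,w)_{i,j}=\langle T v_i,w_j\rangle=\langle v_i,T^\dagger w_j\rangle=(v,G\circ T^\dagger\cdot w)_{i,j}$, with uniqueness of the adjoint (already secured via the Morita equivalence) closing the argument. The paper's proof in fact stops there and silently omits the $F$-identity, so your second computation --- well-definedness of $F(T)$ from $M_n(\dC)$-linearity, then $\langle F(T)v,w\rangle_F=(e_{11}T\mathbf{v}_0,\mathbf{w}_0)_{11}=(T\mathbf{v}_0,\mathbf{w}_0)_{11}=(\mathbf{v}_0,T^\dagger\mathbf{w}_0)_{11}=\langle v,F(T^\dagger)w\rangle_F$ using $(e_{11}M)_{11}=M_{11}$, $(Me_{11})_{11}=M_{11}$ and $e_{11}^*=e_{11}$ --- is a sound and welcome completion of the half the paper leaves to the reader.
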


\begin{proof}
   This is based on the following for $v=(v_1,\ldots,v_n)$ and $w=(w_1,\ldots, w_n)$
    \begin{equation}
    (G\circ T \cdot v,w)_{i,j}=(\langle T\cdot v_i,w_j\rangle)=(\langle v_i, T^\dagger \cdot w_j\rangle)=(v,G\circ T^\dagger \cdot w)_{i,j}. 
    \end{equation}
    This concludes the proof of the lemma. 
\end{proof}

As a consequence, self-adjoint operators are send to self-adjoint operators. The same is true for unitary operators, defined in a natural way for Hermitian $M_n(\dC)$-modules. 

%%%%%%%%%%%%%%%%%%%%%%%%%%%%%%%%%
%%%%%%%%%%%%%%%%%%%%%%%%%%%%%%%%%
%%%%%%%%%%%%%%%%%%%%%%%%%%%%%%%%%
%%%%%%%%%%%%%%%%%%%%%%%%%%%%%%%%%
\subsection{Free modules}

In Section 3 our focus will be on free modules  $V^n$. Free modules admit bases that will be called $M_n(\dC)$-bases to avoid confusion with vector spaces bases on $V^n$. For the ring $M_n(\dC)$ all bases have the same cardinal called the rank of $V^n$. 

\begin{lemma}\label{lem:FreeModuleBasis}
    If $V^n$ is a free module of $M_n(\dC)$ and $e_i=(e_1^i,\ldots, e_n^i)$ is a $M_n(\dC)$-basis of $V^n$ then the family $(e_i^j)_{i,j}$ is a basis of $V$. 
\end{lemma}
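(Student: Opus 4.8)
The plan is to argue directly from the defining property of an $M_n(\dC)$-basis, namely that every element of $V^n$ has a \emph{unique} expression $\sum_{i=1}^r A_i\cdot e_i$ with $A_i\in M_n(\dC)$, where $r$ is the rank and $e_i=(e_1^i,\ldots,e_n^i)$. The one computational ingredient is the explicit action \eqref{eq:MnActionOnVn}: writing $A_i=((a_i)_{pk})_{p,k}$, the $p$-th component of $A_i\cdot e_i$ is $\sum_k (a_i)_{pk}\,e_k^i$, so the $p$-th component of $\sum_i A_i\cdot e_i$ equals $\sum_{i,k}(a_i)_{pk}\,e_k^i$. With this in hand, everything reduces to reading off a single component at a time, and I would prove spanning and linear independence separately so as not to rely on any dimension count.

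For spanning, I would fix an arbitrary $v\in V$ and apply the basis property to the tuple $(v,0,\ldots,0)\in V^n$. Writing $(v,0,\ldots,0)=\sum_i A_i\cdot e_i$ and extracting the first component gives $v=\sum_{i,k}(a_i)_{1k}\,e_k^i$, which exhibits $v$ as a finite $\dC$-linear combination of the $e_k^i$. Hence the family $(e_j^i)_{i,j}$ spans $V$ over $\dC$.

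For linear independence, I would suppose $\sum_{i,j} c_{ij}\,e_j^i=0$ with $c_{ij}\in\dC$ and promote this scalar relation to a module relation. For each $i$, choose $A_i\in M_n(\dC)$ whose first row is $(c_{i1},\ldots,c_{in})$ and whose other rows vanish; then $A_i\cdot e_i=\bigl(\sum_j c_{ij}\,e_j^i,\,0,\ldots,0\bigr)$, so that $\sum_i A_i\cdot e_i=(0,\ldots,0)$ by hypothesis. Uniqueness of the expansion of $0$ in the $M_n(\dC)$-basis forces every $A_i=0$, i.e. $c_{ij}=0$ for all $i,j$. Combined with the previous paragraph, this shows $(e_j^i)_{i,j}$ is a basis of $V$.

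The calculations here are routine; the real point—and what I would treat as the crux—is the correct passage between the two algebraic structures. It is tempting to merely match cardinalities (which would also force one to know $V$ is finite-dimensional and that $r\cdot n=\dim_\dC V$), but the clean route is to exploit \emph{uniqueness} of the module expansion instead. The essential device is that the matrix action mixes the components \emph{within} a single tuple $e_i$ through the entries of $A_i$, while keeping the distinct basis vectors $e_i$ separate through the choice of coefficient matrix; encoding the scalars $c_{ij}$ into one row of $A_i$ is precisely what converts the rigidity of the $M_n(\dC)$-basis into ordinary $\dC$-linear independence. Getting that encoding right (and checking that only the intended component survives) is the step I would be most careful with.
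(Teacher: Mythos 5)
Your proof is correct and takes essentially the same route as the paper's: both arguments read off components of $\sum_i A_i\cdot e_i$ via the explicit action \eqref{eq:MnActionOnVn} and appeal to the uniqueness of the expansion in the $M_n(\dC)$-basis. Your encoding of the scalars $c_{ij}$ into the first row of $A_i$ merely spells out the uniqueness step that the paper's one-line proof leaves implicit.
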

\begin{proof}
    This is a direct consequence of equation \eqref{eq:MnActionOnVn}. Indeed, any element in $V^n$ can be written in a unique way as
    \begin{equation}
    \sum_i A_i e_i =\left( \sum_{i,k}a^i_{1,k}e^i_k,\ldots, \sum_{i,k}a^i_{n,k}e^i_k\right).
    \end{equation}
    Thus any $v\in V$ can be written in a unique way as a linear combination of the $e_k^i$. This concludes the proof of the lemma.
\end{proof}
The next corollary gives a characterization of free module of finite rank. 
\begin{corollary}
\label{cor:free-ndivdimV}
A $M_n(\dC)$-module $V^n$ is a free module of finite rank if and only if $V$ is finite dimensional and $n$ divides $\dim V$. 
\end{corollary}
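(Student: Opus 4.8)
The plan is to prove the two implications separately; the forward direction is an immediate consequence of Lemma~\ref{lem:FreeModuleBasis}, while the converse requires only the reverse bookkeeping. For the direction ``free of finite rank $\Rightarrow n\mid\dim V$'', I would suppose $V^n$ is free with an $M_n(\dC)$-basis $e_1,\ldots,e_r$, where $r$ is the rank (well defined by the invariance of basis cardinality over $M_n(\dC)$ quoted just above). Writing $e_i=(e_1^i,\ldots,e_n^i)$, Lemma~\ref{lem:FreeModuleBasis} says that the family $(e_j^i)_{1\le i\le r,\,1\le j\le n}$ is a vector-space basis of $V$. Counting these $rn$ vectors gives $\dim V=rn$, so in particular $n\mid\dim V$.

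For the converse I assume $\dim V=rn$ for some integer $r$ (the hypothesis $n\mid\dim V$ presupposes $\dim V$ finite). I would fix a $\dC$-basis $f_1,\ldots,f_{rn}$ of $V$ and repackage it into $r$ tuples by setting
\[
e_i=(f_{(i-1)n+1},\ldots,f_{(i-1)n+n})\in V^n,\qquad i=1,\ldots,r,
\]
so that $e_j^i=f_{(i-1)n+j}$ and the double-indexed family $(e_j^i)_{i,j}$ is exactly the chosen basis of $V$. It then remains to verify that $e_1,\ldots,e_r$ is an $M_n(\dC)$-basis of $V^n$. Using the action \eqref{eq:MnActionOnVn}, a combination $\sum_i A_i\cdot e_i$ has $\ell$-th component $\sum_{i,k}a^i_{\ell k}e_k^i$; given an arbitrary $(v_1,\ldots,v_n)\in V^n$, expanding each $v_\ell$ in the basis $(e_k^i)$ determines the entries $a^i_{\ell k}$ uniquely, which exhibits a unique tuple $A_1,\ldots,A_r$ with $\sum_i A_i\cdot e_i=(v_1,\ldots,v_n)$. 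Hence $V^n$ is free of rank $r$, which is finite, completing this direction.

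The only genuine content is the index matching in the converse: the $n$ slots of a tuple in $V^n$ correspond to the $n$ rows of each coefficient matrix $A_i$, while the $n$ basis vectors grouped into a single $e_i$ correspond to its $n$ columns, so the data of $(v_1,\ldots,v_n)$ is carried bijectively by the entries $a^i_{\ell k}$. This is routine once the indexing is fixed, and it simply reverses the uniqueness argument already carried out in Lemma~\ref{lem:FreeModuleBasis}; I do not expect a substantive obstacle. I would not reprove that the rank is well defined, and I would record that both directions tacitly use finiteness of $\dim V$, since finite rank forces $V$ to be finite-dimensional through Lemma~\ref{lem:FreeModuleBasis} and divisibility is only meaningful for finite $\dim V$.
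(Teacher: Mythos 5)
Your proof is correct and takes essentially the same approach as the paper: the forward direction invokes Lemma~\ref{lem:FreeModuleBasis} to count the $rn$ vectors coming from a rank-$r$ basis, and the converse repackages a chosen basis of $V$ into $n$-tuples and checks via the action \eqref{eq:MnActionOnVn} that these form an $M_n(\dC)$-basis. Your explicit coefficient-matching in the converse merely spells out what the paper compresses into ``as seen using equation \eqref{eq:MnActionOnVn}'', so there is no substantive difference.
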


\begin{proof}
    Assume $V^n$ is a free $M_n(\mathbb{C})$ module of finite rank. The previous Lemma proves that $n$ divides $\dim V$. More precisely, $n \rank (M) =\dim V$. Conversely, if $n$ divides $\dim V$. Let $|k\rangle$ be a basis of $V$. Then the family 
    \begin{equation}\label{eq:ModuleGenerators}
    e_i = (|ni\rangle, \ldots, |ni+n-1\rangle),
    \end{equation}
is a $M_n(\dC)$-basis of $V^n$ as seen using equation \eqref{eq:MnActionOnVn}. 
\end{proof}
When $V^n$ is a module of finite rank, we will write is rank $N+1$ so that $\dim (V)=n(N+1)$.

\begin{remark}
    Notice that if $V^n$ is not of finite rank but of countable rank then the family \eqref{eq:ModuleGenerators} is still a $M_n(\dC)$-basis of $V^n$. Thus for free module of countable rank there is a correspondence between $M_n(\dC)$-basis of $V^n$ and basis of $V$. However, it is not one-to-one since two bases of $V$ can lead to two different $M_n(\dC)$-bases of $V^n$ for example by permuting the components of the generators $e_i$. 
\end{remark}

We now assume that the $M_n(\dC)$-modules are all endowed with a matrix valued inner product. We are then naturally interested in orthogonal $M_n(\dC)$-basis $(e_k)_k$ which satisfies $(e_k,e_\ell)=0$ for $k\neq \ell$, and in orthonormal basis which moreover satisfies $(e_k,e_k)=I_n$. It is not true that an orthogonal $M_n(\dC)$-basis $e_i$ of $V^n$ will lead to an orthogonal basis for $V$. It is true if and only if the inner product $(e_k,e_k)$ is a diagonal matrix. However, orthonormal bases are preserved via Lemma \ref{lem:FreeModuleBasis}. 

\begin{corollary}
    If $V^n$ is a free Hermitian module of countable rank then  $e_i=(e_1^i,\ldots, e_n^i)$ is an orthonormal $M_n(\dC)$-basis of $V^n$ if and only if the family $(e_i^j)_{i,j}$ is an orthonormal basis of $V$.
\end{corollary}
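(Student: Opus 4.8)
The plan is to reduce everything to reading off the entries of the matrix-valued inner product, since the two orthonormality conditions will turn out to be literally the same system of scalar equations. By the unitary Morita equivalence established above, I may assume that the inner product on the free Hermitian module is the one of \eqref{eq:mv-innerproduct}, coming from a Hermitian space $(V,\langle\cdot,\cdot\rangle)$, and I will write the $a$-th component of the $i$-th basis vector as $e_a^i$, so that $e_i=(e_1^i,\ldots,e_n^i)$ and the family appearing in the statement is $(e_a^i)_{i,a}$ (as in the proof of Lemma \ref{lem:FreeModuleBasis}).

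First I would record the module-basis correspondence, which carries the word \emph{basis} from one side of the equivalence to the other. By Lemma \ref{lem:FreeModuleBasis}, if $(e_i)_i$ is an $M_n(\dC)$-basis of $V^n$ then $(e_a^i)_{i,a}$ is a basis of $V$; conversely, if $(e_a^i)_{i,a}$ is a basis of $V$ then, grouping the components exactly as in \eqref{eq:ModuleGenerators}, formula \eqref{eq:MnActionOnVn} shows that $\sum_i A_i\cdot e_i$ runs through every element of $V^n$ in a unique way as the $A_i$ vary, so $(e_i)_i$ is an $M_n(\dC)$-basis. Hence ``$(e_i)_i$ is an $M_n(\dC)$-basis of $V^n$'' and ``$(e_a^i)_{i,a}$ is a basis of $V$'' are equivalent.

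Next comes the key computation. Applying \eqref{eq:mv-innerproduct} to $e_i=(e_1^i,\ldots,e_n^i)$ and $e_k=(e_1^k,\ldots,e_n^k)$, the $(a,b)$ entry of the matrix $(e_i,e_k)$ is $\langle e_a^i,e_b^k\rangle$. Therefore the orthonormality condition $(e_i,e_k)=\delta_{ik}I_n$ for all $i,k$ is equivalent to the system $\langle e_a^i,e_b^k\rangle=\delta_{ik}\delta_{ab}$ holding for all $i,k$ and all $a,b\in\{1,\ldots,n\}$, which is precisely the assertion that the family $(e_a^i)_{i,a}$ is orthonormal in $V$ (distinct members, whether differing in basis index or in component, are orthogonal, and each has norm one).

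It then remains only to assemble the two directions. If $(e_i)_i$ is an orthonormal $M_n(\dC)$-basis, it is in particular an $M_n(\dC)$-basis, so $(e_a^i)_{i,a}$ is a basis of $V$ by the correspondence, and the entrywise computation makes it orthonormal, hence an orthonormal basis; conversely, if $(e_a^i)_{i,a}$ is an orthonormal basis of $V$, the correspondence makes $(e_i)_i$ an $M_n(\dC)$-basis and the same computation yields $(e_i,e_k)=\delta_{ik}I_n$. I do not expect a genuine obstacle: everything reduces to the single entrywise identity above together with Lemma \ref{lem:FreeModuleBasis}. The only point that deserves a word of care is the converse half of the module-basis correspondence, which is handled exactly as in the proof of Corollary \ref{cor:free-ndivdimV} and the remark following it, and which is what forces the hypothesis of countable rank to enter.
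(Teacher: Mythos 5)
Your proof is correct and follows essentially the same route the paper intends: the paper offers no written proof beyond the remark that ``orthonormal bases are preserved via Lemma \ref{lem:FreeModuleBasis}'', and your argument is exactly that lemma (together with its easy converse via \eqref{eq:MnActionOnVn}) combined with the entrywise identity $(e_i,e_k)_{a,b}=\langle e_a^i,e_b^k\rangle$ from \eqref{eq:mv-innerproduct}, which the paper's unitary Morita equivalence $G\circ F((\cdot,\cdot))=(\cdot,\cdot)$ justifies just as you say. Your spelled-out version, including the converse half of the basis correspondence, is a faithful completion of the paper's intended one-line argument.
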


A last point in this section is to relate eigenbases of operators on $V$ to $M_n(\dC)$-eigenbases of morphisms on $V^n$. The following definition makes explicit classical notions in the context of $M_n(\dC)$-modules. 

\begin{definition}
    Let $V^n$ be a free $M_n(\dC)$-module and $T\in \End_{M_n(\dC)}(V^n)$. A family $(e_i)_i$ is a $M_n(\dC)$-eigenbasis of $T$ if 
    \begin{equation}
    T\cdot e_i=\Lambda_i e_i,
    \end{equation}
    with $\Lambda_i\in M_n(\dC)$, and $(e_i)_i$ is an $M_n(\dC)$-basis of $V^n$. We also say $T$ is diagonalisable when $T$ admits an eigenbasis. 
\end{definition}

The next lemma relates diagonalisability on $V$ and on $V^n$. 
\begin{lemma}\label{lem:Diagonalisability}
    Let $V^n$ be a free module of countable rank. $H\in \End_\dC(V)$ is diagonalisable if and only if $G\circ H \in \End_{M_n(\dC)}(V^n)$ is diagonalisable with diagonalisable eigenvalues $\Lambda_i$.
\end{lemma}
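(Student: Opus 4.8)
The plan is to prove the two implications separately, working throughout with the explicit componentwise description of how $G\circ H$ acts on $V^n$ via the action \eqref{eq:MnActionOnVn}, and with the correspondence between bases of $V$ and $M_n(\dC)$-bases of $V^n$ recorded in Lemma~\ref{lem:FreeModuleBasis} and equation~\eqref{eq:ModuleGenerators}. The whole point is to transport diagonalisability back and forth through the functor $G$, and the matrix-valued eigenvalues $\Lambda_i$ will encode the scalar eigenvalues of $H$ in a block-diagonal fashion.

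For the forward implication, suppose $H$ is diagonalisable and fix an eigenbasis $(f_k)_k$ of $V$ with $H f_k=\lambda_k f_k$. Grouping these vectors into $n$-tuples as in \eqref{eq:ModuleGenerators}, I would set $e_i=(f_{ni},f_{ni+1},\ldots,f_{ni+n-1})$, which the remark following Corollary~\ref{cor:free-ndivdimV} guarantees is an $M_n(\dC)$-basis of $V^n$. A short computation using \eqref{eq:MnActionOnVn} then gives
\[
G(H)\cdot e_i=(\lambda_{ni}f_{ni},\ldots,\lambda_{ni+n-1}f_{ni+n-1})=\Lambda_i\cdot e_i,
\]
with $\Lambda_i=\diag(\lambda_{ni},\ldots,\lambda_{ni+n-1})$. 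Each $\Lambda_i$ is diagonal, hence in particular diagonalisable, so $G\circ H$ is diagonalisable with diagonalisable eigenvalues.

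For the converse, suppose $G\circ H$ admits an $M_n(\dC)$-eigenbasis $(e_i)_i$, $e_i=(e_1^i,\ldots,e_n^i)$, with $G(H)\cdot e_i=\Lambda_i e_i$ and each $\Lambda_i$ diagonalisable. Reading off the $j$-th component of the eigenvalue equation through \eqref{eq:MnActionOnVn} yields
\[
H e_j^i=\sum_k(\Lambda_i)_{jk}\,e_k^i,
\]
so that the subspace $W_i=\mathrm{span}_\dC(e_1^i,\ldots,e_n^i)$ is $H$-invariant and the matrix of $H|_{W_i}$ in the basis $(e_j^i)_j$ is the transpose $\Lambda_i^{T}$. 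Since $\Lambda_i^{T}$ is similar to $\Lambda_i$, it is diagonalisable, hence so is $H|_{W_i}$. By Lemma~\ref{lem:FreeModuleBasis} the family $(e_j^i)_{i,j}$ is a basis of $V$, so $V=\bigoplus_i W_i$ is a direct sum of $H$-invariant subspaces on each of which $H$ is diagonalisable; therefore $H$ is diagonalisable.

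The main obstacle is the converse direction, and specifically seeing why the hypothesis that each $\Lambda_i$ be diagonalisable cannot be dropped. The matrix-valued eigenvalue equation only determines $H|_{W_i}$ as a matrix similar to $\Lambda_i$ (through the transpose); were $\Lambda_i$ permitted to carry a nontrivial Jordan block, $H|_{W_i}$ would inherit that block and fail to be diagonalisable, even though $(e_i)_i$ remains a genuine $M_n(\dC)$-eigenbasis of $G\circ H$. Tracking the transpose correctly and checking that the subspaces $W_i$ furnish an honest direct sum decomposition of $V$—which rests on the countable-rank correspondence between bases of $V$ and $M_n(\dC)$-bases of $V^n$—are the points that require the most care.
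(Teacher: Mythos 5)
Your proof is correct, and your forward direction coincides with the paper's: group the eigenvectors of $H$ into $n$-tuples as in \eqref{eq:ModuleGenerators} and read off diagonal eigenvalues $\Lambda_i$. The converse is where you diverge in packaging, though not in substance. The paper writes $\Lambda_i=P_i^{-1}D_iP_i$ and replaces the eigenbasis $(e_i)_i$ by $(P_i\, e_i)_i$, which by $M_n(\dC)$-linearity of $G\circ H$ is again an $M_n(\dC)$-eigenbasis but now with \emph{diagonal} eigenvalues $D_i$; reading that eigenvalue equation componentwise exhibits the coordinates of $P_i e_i$ directly as eigenvectors of $H$, and Lemma~\ref{lem:FreeModuleBasis} makes them a basis of $V$. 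You instead keep the original basis, observe via \eqref{eq:MnActionOnVn} that $W_i=\mathrm{span}_\dC(e_1^i,\ldots,e_n^i)$ is $H$-invariant with matrix $\Lambda_i^{T}$ in the basis of components (the transpose is the correct bookkeeping under the paper's row convention, and you track it accurately), and conclude from the direct-sum decomposition $V=\bigoplus_i W_i$, which again rests on Lemma~\ref{lem:FreeModuleBasis}. Both arguments hinge on the same key hypothesis, the diagonalisability of each $\Lambda_i$. What the paper's conjugation buys is an explicit new $M_n(\dC)$-eigenbasis with diagonal eigenvalues and explicit eigenvectors of $H$, which fits the constructive use made of such bases elsewhere in the paper (e.g.\ in Corollary~\ref{coro:SelfAdjointDiagonalisability}); what your invariant-subspace route buys is a transparent explanation of why the hypothesis on $\Lambda_i$ cannot be dropped, since a nontrivial Jordan block in some $\Lambda_i$ would transfer to $H|_{W_i}$ and destroy diagonalisability of $H$ while leaving $(e_i)_i$ a perfectly good $M_n(\dC)$-eigenbasis of $G\circ H$ — a point the paper's proof leaves implicit.
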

\begin{proof}
On one hand, if $H$ is diagonalisable with eigenvectors $\ket{i}$ then the basis $e_i=(\ket{ni},\ldots,\ket{ni+n-1})$ is a diagonal eigenbasis for $G\circ H$. Moreover the eigenvalues $\Lambda_i$ associated to $e_i$ are diagonal matrices. 

On the other hand, if $G\circ H$ is diagonalisable with $M_n(\dC)$-eigenbasis and $\Lambda_i$ is diagonalized by $P_i^{-1} D_i P_i$ then
    \begin{equation}
    G\circ H (P_i e_i)= D_i \left(P_ie_i\right). 
    \end{equation}
    Since $P_i$ is invertible the family $(P_i e_i)_i $ is a $M_n(\dC)$-eigenbasis of $G\circ H$ with diagonal eigenvalue $D_i$. The coordinates of this $M_n(\dC)$-basis form a basis of $V$ which is an eigenbasis for $H$. 
\end{proof}

The following corollary is a generalization of the spectral theorem for self-adjoint operators. 
\begin{corollary}\label{coro:SelfAdjointDiagonalisability}
Let $V^n$ be a free Hermitian module of countable rank. Every self-adjoint operator for $V^n$ admits an orthonormal $M_n(\dC)$-eigenbasis. 
\end{corollary}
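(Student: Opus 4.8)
The plan is to reduce the corollary to the ordinary spectral theorem on the Hermitian space $V$ through the Morita equivalence, in the same spirit as Lemma~\ref{lem:Diagonalisability}. Starting from a self-adjoint operator $T\in\End_{M_n(\dC)}(V^n)$, I would form $H:=F\circ T\in\End_\dC(V)$. By the lemma on adjoint preservation, $(F\circ T)^\dagger=F\circ T^\dagger=F\circ T$, so $H$ is self-adjoint on $V$; and under the equivalence $\End_{M_n(\dC)}(V^n)\cong\End_\dC(V)$ the operator $T$ is recovered as $G\circ H$.

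Next I would apply the spectral theorem to $H$: being self-adjoint on $V$, it admits an orthonormal eigenbasis $(\ket{i})_i$ with real eigenvalues $\lambda_i$, i.e.\ $H\ket{i}=\lambda_i\ket{i}$. This is the single step carrying analytic content; for $V$ of countable dimension it rests on $H$ having a discrete, diagonalisable spectrum, which is precisely the situation of the later applications and the hypothesis under which Lemma~\ref{lem:Diagonalisability} operates.

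Finally I would push this eigenbasis back to $V^n$ exactly as in the proof of Lemma~\ref{lem:Diagonalisability}. Grouping the $\ket{i}$ into consecutive blocks of size $n$, put $e_i=(\ket{ni},\ldots,\ket{ni+n-1})$. Using the action \eqref{eq:MnActionOnVn} one gets $(G\circ H)e_i=\Lambda_i e_i$ with $\Lambda_i=\diag(\lambda_{ni},\ldots,\lambda_{ni+n-1})$, so $(e_i)_i$ is a diagonal $M_n(\dC)$-eigenbasis of $T=G\circ H$. Since $(\ket{i})_i$ is orthonormal in $V$, the orthonormality-preservation corollary yields $(e_i,e_j)=\delta_{ij}I_n$, so $(e_i)_i$ is in fact an orthonormal $M_n(\dC)$-eigenbasis, as required.

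The only genuinely nontrivial ingredient is the scalar spectral theorem on $V$; everything else is the categorical bookkeeping already assembled in this section (adjoint preservation, Lemma~\ref{lem:Diagonalisability}, and the orthonormality corollary). The point demanding care is that one and the same grouping into blocks of $n$ must simultaneously make the eigenvalue matrices $\Lambda_i$ diagonal and preserve orthonormality; both facts follow from the explicit formula \eqref{eq:mv-innerproduct} for $(\cdot,\cdot)_G$, so the constructed $M_n(\dC)$-basis is at once an eigenbasis and orthonormal.
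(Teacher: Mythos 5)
Your proposal is correct and follows essentially the same route as the paper's own (much terser) proof: apply $F$ to get a self-adjoint operator on $V$, invoke the scalar spectral theorem to obtain an orthonormal eigenbasis $\ket{k}$, and regroup into blocks $e_i=(\ket{ni},\ldots,\ket{ni+n-1})$, which the earlier corollary certifies as an orthonormal $M_n(\dC)$-eigenbasis with diagonal eigenvalue matrices. Your added caveat that the spectral-theorem step is where the analytic content lives (and is taken for granted in the countable-rank setting) is a fair observation, and it is glossed over in exactly the same way in the paper.
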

\begin{proof}
   If $T$ is a self-adjoint operator then so is $F\circ T$, thus it admits an eigenbasis $\ket{k}$ of $V$ made of orthonormal generators.
   Then the family $e_i=(\ket{ni},\ldots,\ket{ni+n-1})$ is an orthonormal $M_n(\dC)$-eigenbasis. 
\end{proof}

%%%%%%%%%%%%%%%%%%%%%%%%%%%%%%%%%
%%%%%%%%%%%%%%%%%%%%%%%%%%%%%%%%%
%%%%%%%%%%%%%%%%%%%%%%%%%%%%%%%%%
%%%%%%%%%%%%%%%%%%%%%%%%%%%%%%%%%
\section{Algebraic interpretation for MVOPs}
\label{sec:algebraic-interpretation-mvop}
In this section we explain how one can recover MVOPs as transition coefficients for representations of algebras over Hermitian $M_n(\dC)$-modules. Let $\mathfrak{g}$ be a ($q$-deformed) Lie algebra, and $(V,\rho)$ be a unitary representation of $\mathfrak{g}$. Then we can create a representation of $\mathfrak{g}$ on $\End_{M_n(\dC)}(V^n)$ using the following commutative diagram.

\[
  \xymatrix{  \End_{M_n(\dC)}(V^n) \ar@<2pt>[rr]^F  && \End_\dC(V) \ar@<2pt>[ll]^G \\ & \ar[lu]^{\tau} \mathfrak{g} \ar[ru]_\rho }
\]
From now on we will assume that $V$ has finite dimension $\dim(V)=n(N+1)$.
Let $H$ be an element of $\mathfrak{g}$ which admits an orthonormal $M_n(\dC)$-eigenbasis $(e_k)_k$ with eigenvalues $\Lambda_k\in M_n(\dC)$:
\begin{equation}
H\cdot e_k=\Lambda_k e_k,\qquad k=0,\ldots,N,
\end{equation}
and $P$ an operator, which acts tridiagonally on the basis $(e_k)_k$ and is self-adjoint. This operator can be described as in the following proposition.

\begin{proposition}
\label{pr:3TR-selfadjoint}
Let $(e_k)_k$ be a orthogonal $M_n(\dC)$-basis of $V^n$ and $P\in \End_{M_n(\dC)}(V^n)$ such that
\begin{equation}\label{eq:action-self-adjoint-operator}
    P\cdot e_k = A_{k+1}^\ast e_{k+1} +B_k e_k + A_k e_{k-1},
\end{equation}
where $B_k=B_{k}^\ast.$ Then $P$ is self-adjoint with respect to the matrix valued inner product \eqref{eq:mv-innerproduct}. 
Conversely, if $P$ is self-adjoint and acts tridiagonally on the basis $(e_k)_k$ then $P$ is described by formula \eqref{eq:action-self-adjoint-operator}.
\end{proposition}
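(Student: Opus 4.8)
The plan is to reduce everything to a computation against the basis vectors and then read off the self-adjointness condition position by position. First I would observe that, since $P$ is $M_n(\dC)$-linear and the inner product is additive in each slot, left $M_n(\dC)$-linear in the first argument by property (1) and conjugate-linear in the second through the derived identity $(m_1,A\cdot m_2)=(m_1,m_2)A^\ast$, the condition $(P\cdot m_1,m_2)=(m_1,P\cdot m_2)$ for all $m_1,m_2$ is equivalent to the family of identities $(P\cdot e_k,e_\ell)=(e_k,P\cdot e_\ell)$ over all indices $k,\ell$. Concretely, expanding $m_1=\sum_k C_k\cdot e_k$ and $m_2=\sum_\ell D_\ell\cdot e_\ell$ gives $(P\cdot m_1,m_2)=\sum_{k,\ell}C_k\,(P\cdot e_k,e_\ell)\,D_\ell^\ast$ and $(m_1,P\cdot m_2)=\sum_{k,\ell}C_k\,(e_k,P\cdot e_\ell)\,D_\ell^\ast$, so equality for arbitrary $C_k,D_\ell$ is exactly the basis identity.

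Next I would substitute \eqref{eq:action-self-adjoint-operator} into both sides and use orthonormality $(e_k,e_\ell)=\delta_{k\ell}I_n$. Property (1) gives
\[
(P\cdot e_k,e_\ell)=A_{k+1}^\ast(e_{k+1},e_\ell)+B_k(e_k,e_\ell)+A_k(e_{k-1},e_\ell)=A_{k+1}^\ast\delta_{k+1,\ell}+B_k\delta_{k,\ell}+A_k\delta_{k-1,\ell},
\]
while the identity $(m_1,A\cdot m_2)=(m_1,m_2)A^\ast$ gives
\[
(e_k,P\cdot e_\ell)=(e_k,e_{\ell+1})A_{\ell+1}+(e_k,e_\ell)B_\ell^\ast+(e_k,e_{\ell-1})A_\ell^\ast=\delta_{k,\ell+1}A_{\ell+1}+\delta_{k,\ell}B_\ell^\ast+\delta_{k,\ell-1}A_\ell^\ast.
\]
Comparing the single nonzero entry for each relative position of $k$ and $\ell$: the diagonal $\ell=k$ entry requires $B_k=B_k^\ast$, which is the hypothesis; the $\ell=k+1$ entry matches $A_{k+1}^\ast$ against $A_{k+1}^\ast$; and the $\ell=k-1$ entry matches $A_k$ against $A_k$. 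Hence every entry agrees and $P$ is self-adjoint.

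For the converse I would start from an arbitrary tridiagonal action $P\cdot e_k=\alpha_k e_{k+1}+\beta_k e_k+\gamma_k e_{k-1}$ and run the same computation. Imposing $(P\cdot e_k,e_\ell)=(e_k,P\cdot e_\ell)$ forces $\beta_k=\beta_k^\ast$ from the $\ell=k$ entry and $\alpha_k=\gamma_{k+1}^\ast$ from the $\ell=k+1$ entry; relabelling $A_k:=\gamma_k$ and $B_k:=\beta_k$ then recovers precisely \eqref{eq:action-self-adjoint-operator}.

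The arithmetic is routine; the step that deserves the most care is the reduction to the basis, where one must keep track of which argument of the inner product carries the matrix coefficient and hence whether it appears with or without a conjugate transpose. The only genuine subtlety is the hypothesis: the clean form \eqref{eq:action-self-adjoint-operator} uses $(e_k,e_\ell)=\delta_{k\ell}I_n$, i.e. it relies on the basis being orthonormal rather than merely orthogonal. For a non-normalized orthogonal basis the same calculation instead produces the Gram matrices $G_k=(e_k,e_k)$, and self-adjointness becomes the intertwining conditions $A_{k+1}^\ast G_{k+1}=G_kA_{k+1}^\ast$ together with $B_kG_k=G_kB_k$; so I would either work with an orthonormal basis, as furnished by Corollary \ref{coro:SelfAdjointDiagonalisability}, or carry these Gram factors explicitly through the computation.
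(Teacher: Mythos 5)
Your proof is correct and follows essentially the same route as the paper: verify $(P\cdot e_k,e_\ell)=(e_k,P\cdot e_\ell)$ on the basis (with your explicit reduction via $m_1=\sum_k C_k\cdot e_k$ making precise what the paper leaves implicit), expand both sides using property (1) and the derived identity $(m_1,A\cdot m_2)=(m_1,m_2)A^\ast$, compare the three nonzero diagonals, and obtain the converse from a general tridiagonal action $P\cdot e_k=\alpha_k e_{k+1}+\beta_k e_k+\gamma_k e_{k-1}$ exactly as the paper does. Your closing caveat is moreover well taken: the paper states the result for a merely orthogonal basis but its proof silently needs the Gram factors to drop out (the $\ell=k$ entry compares $B_k(e_k,e_k)$ with $(e_k,e_k)B_k^\ast$, and the $\ell=k+1$ entry compares $A_{k+1}^\ast(e_{k+1},e_{k+1})$ with $(e_k,e_k)A_{k+1}^\ast$), so the clean form indeed requires orthonormality or your intertwining conditions $A_{k+1}^\ast G_{k+1}=G_kA_{k+1}^\ast$ and $B_kG_k=G_kB_k$ --- harmless in practice, since every basis the paper applies this to is orthonormal.
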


\begin{proof}
The proof follows verifying that $(P\cdot e_k, e_\ell)=(e_k, P \cdot e_\ell)$ for all $k, \ell.$ On one hand, we have
$$(P\cdot e_k, e_\ell)=(A_{k+1}^\ast e_{k+1} +B_k e_k + A_k e_{k-1},e_\ell)=A_{k+1}^\ast (e_{k+1},e_\ell) + B_k (e_k,e_\ell) +A_k(e_{k-1},e_\ell).$$
On the other hand, we have
$$( e_k, P\cdot e_\ell)=(e_k, A_{\ell+1}^\ast e_{\ell+1}+B_\ell e_\ell +A_\ell e_{\ell-1})= (e_{k},e_{\ell+1})A_{\ell+1} + B_k^\ast (e_k,e_\ell) +(e_{k},e_{\ell-1})A_{\ell}^\ast$$
Since $(e_k)_k$ is an orthogonal $M_n(\dC)$-basis of $V^n$, and $B_k=B_k^\ast$ we get the desired result. 

The converse is obtained starting from a tridiagonal action $P\cdot e_k=\alpha_k e_{k+1}+ \beta_k e_k +\gamma_k e_{k-1}$ and using that $P$ is self-adjoint. 
\end{proof}

According to Corollary \ref{coro:SelfAdjointDiagonalisability}, the operator $P$ admits an orthonormal $M_n(\dC)$-eigenbasis $(\phi_j)_j$ with eigenvalues $\alpha_j\in M_n(\dC)$:
\begin{equation}
    P\cdot \phi_j = \alpha_j\phi_j\,,\quad j=0,\ldots,N.
\end{equation}

Now consider the transition coefficients $( e_k,\phi_j)$, and define the family $\Pi_k(\theta_j)$ where $\theta_j=( e_0, \phi_j) {\alpha}_j^*( e_0, \phi_j)^{-1}$ by
\begin{equation}
\label{eq:transition-coef-general}
( e_k,\phi_j) =\Pi_k(\theta_j)( e_0, \phi_j),\quad k,j=0,\ldots,N,
\end{equation}
which is possible as soon as the matrix $( e_0, \phi_j)$ is invertible. 
We will assume this is the case in the rest of the section. The family $\Pi_k(\theta_j)$ satisfies the following three term recurrence relation:
\begin{equation}
\label{eq:recurrence-pi}
    A_{k+1}^*\Pi_{k+1}(\theta_j)+B_k\Pi_k(\theta_j)+A_k\Pi_{k-1}(\theta_j)=\Pi_k(\theta_j)\theta_j, \qquad \Pi_{-1}(\theta_j)=0,\qquad \Pi_0(\theta_j)=I_n.
\end{equation}

We will now see that this three term recurrence relation uniquely determines a family of matrix valued polynomials in one variable $\Pi_k(x)$.

\begin{definition}
\label{def:vandermonde}
Let $\theta_j\in M_n(\dC)$ be an entrywise rational function of $j$
without poles in $0,\ldots,N$. We say that $(\theta_j)_j$ satisfies the Vandermonde condition if for every $k\in 1,\ldots,N$, the following block Vandermonde matrix is invertible
\begin{equation}
\label{eq:vandermonde}
V(\theta_0,\ldots,\theta_k)=\begin{pmatrix} I & I & \cdots & I \\
\theta_0 & \theta_1& \cdots & \theta_k \\
\vdots & \vdots & \ddots & \vdots \\
\theta_0^k &\theta_1^k & \cdots & \theta_k^k
\end{pmatrix}.
\end{equation}
\end{definition}
Notice that the Vandermonde condition being satisfied implies $\theta_i\neq \theta_j$ for all $i\neq j$. However, the converse is not true in general.  
\begin{proposition}
\label{thm:interpolation}
Given $k$ pairs of matrices $(\theta_i,Y_i)$ $i=0,\ldots,k-1$, such that the Vandermonde matrix $V(\theta_0,\ldots,\theta_{k-1})$ is invertible, there exists a unique monic matrix valued polynomial $P(x)=x^{k}+M_{k-1} x^{k-1}+\ldots+M_0\,,$ $x\in M_n(\mathbb{C})$
which satisfies $P(\theta_i)=Y_i$ for $i=0,\ldots,k-1$.
\end{proposition}
\begin{proof}
The equations $P(\theta_i)=Y_i$ for $i=0,\ldots,k$ can be reformulated as
    \begin{equation}
  \begin{pmatrix}
    M_0,M_1,\ldots,M_{k-1}
\end{pmatrix}\begin{pmatrix} I & I & \cdots & I \\
\theta_0 & \theta_1& \cdots & \theta_{n-1} \\
\vdots & \vdots & \ddots & \vdots \\
\theta_0^{k-1} &\theta_1^{k-1} & \cdots & \theta_{k-1}^{k-1}
\end{pmatrix}=\begin{pmatrix}
    Y_0-\theta_0^k,Y_1-\theta_0^k,\ldots,Y_{k}-\theta_{k-1}^k
\end{pmatrix}. 
\end{equation} 
Since the Vandermonde matrix $V(\theta_0, \ldots, \theta_k)$ is invertible, the coefficients $M_0, \ldots, M_{k-1}$ are uniquely defined and the matrix polynomial $P$ satisfying $P(\theta_i)=Y_i$ is unique. 
\end{proof}
\begin{proposition}
If $(\theta_j)_j$ satisfies the Vandermonde condition and $A_k$ (defined in \eqref{eq:action-self-adjoint-operator}) are invertible for all $k$ then there exists a unique family of matrix valued polynomials 
\begin{equation}
    \Pi_k(x)=M_{k,k}x^k+M_{k,k-1}x^{k-1}+\ldots+M_{k,0},\quad x\in M_n(\mathbb{C}),
\end{equation}
with invertible leading coefficient satisfying \eqref{eq:recurrence-pi}. Moreover, the polynomials $\Pi_k(x)$ satisfy the orthogonality relation:
\begin{equation}
    \sum_{j=0}^{N+1} \Pi_k(\theta_j) W(j) \Pi_l(\theta_j)^*=\delta_{k,l},
\end{equation}
where $W(j)=( e_0, \phi_j)( e_0, \phi_j)^*$.
\end{proposition}
\begin{proof}
Let $\widetilde\Pi_k(\theta_j)=A_{k}^*\ldots A_1^* \Pi_k(\theta_j)$, equation \eqref{eq:recurrence-pi} can be rewritten in monic form as
\begin{equation}
\label{eq:recurrence-monicpi}
\widetilde\Pi_{k}(\theta_j)\theta_j=\widetilde\Pi_{k+1}(\theta_j)+B_k \widetilde\Pi_k(\theta_j)+A_kA_k^* \widetilde\Pi_k(\theta_j), \qquad\widetilde\Pi_{-1}(\theta_j)=0,\qquad \widetilde\Pi_0(\theta_j)=I_n\,.
\end{equation}
The proof follows by applying Proposition \ref{thm:interpolation} to the pairs of matrices $(\theta_j,Y_{k+1,j})$, $j=0,\ldots,k$, with 
$Y_{k+1,j}=\widetilde\Pi_{k}(\theta_j)\theta_j-B_k \widetilde\Pi_k(\theta_j)-A_kA_k^* \widetilde\Pi_k(\theta_j).$ The orthogonality relation is a consequence of the orthogonality of the basis $e_k$ and $\phi_j$.
\begin{equation}
    \delta_{k,l}=( e_k,e_l) =\sum_{j=0}^{N+1} ( e_k,\phi_j) ( e_l,\phi_j)^*=\sum_{j=0}^{N+1} \Pi_k(\theta_j) W(j) \Pi_l(\theta_j)^*,
\end{equation}
This concludes the proof of the proposition.
\end{proof}

\begin{remark}
Unlike in the scalar valued situation, an operator might be diagonalisable with different set of eigenvalues, and thus different eigenbases. This will lead to different matrix valued orthogonal polynomials that might be equivalent. As an example let us consider diagonal change of eigenbasis. Consider $f_i=P_ie_i$ and $\Psi_i=Q_i\phi_i$ with $P_i,\ Q_i\in GL_n(\dC)$. Then one has
\begin{equation}
H\cdot f_i =(P_i\Lambda P_i^{-1}) f_i=\Lambda'_i f_i, \quad P\cdot \Psi_j = (Q_j\theta_jQ_j^{-1})\Psi_j=\theta'_j\Psi_j. 
\end{equation}
We then introduce $\Pi'_n$ by 
\begin{equation}
( f_n, \Psi_j) = \Pi'_n(\theta'_j) ( f_0,\Psi_j) . 
\end{equation}
The matrix $( f_0,\Psi_j) $ is invertible if $( e_0,\phi_j)$ is invertible. This leads to the relation 
\begin{equation}
\Pi'_n(\theta'_j)=P_n\Pi_n(\theta_j)P_0^{-1}. 
\end{equation}
Thus the polynomials $\Pi_n$ and $\Pi'_n$ gives two equivalent families of matrix valued orthogonal polynomials in the sense that their associated weight are conjugated by an invertible matrix. More precisely one has: 
\begin{equation}
P_nP_m^* \delta_{nm} =\sum_j \Pi'_n(\theta'_j)W'(j)\Pi'_m(\theta'_j)^*,
\end{equation}
with $W'(j)=P_0W(j)P_0^{-1}$.  
\end{remark}

Back to generalities, we describe a setting that we are going to use to provide several examples in the last three sections. More precisely, using the Morita equivalence  we create operators acting tridiagonally on a $M_n(\dC)$-basis starting from operators acting tridiagonally on a $\dC$-basis. From now on, to avoid lengthy notations, we will denote by the same letter elements in $\End_\dC(V)$ and their image by the functor $G$.  

Let us consider a complex vector space $V$ of dimension $\dim V=n(N+1)$ together with a self-adjoint operator $H$ with eigenbasis $\ket{k}$ for the eigenvalues $\lambda_k$. Consider $P$ a self-adjoint operator acting tridiagonally on $\ket{k}$
\begin{equation}
P\ket{k}= a_{k+1}^* \ket{k+1}+b_k\ket{k}+a_k \ket{k-1},\quad k=0,\ldots,n(N+1)-1.
\end{equation}
with non zero $a_{k+1}$. We write $\ket{\phi_j}$ for the eigenvectors of $P$ with eigenvalues $\mu_j$.

Now let us consider the $M_n(\dC)$-module $V^n$, and the two $M_n(\dC)$-bases 
\begin{equation}
e_k=(\ket{nk},\ldots,\ket{nk+n-1}),\ \Phi_j=(\ket{\phi_{nj}},\ldots,\ket{\phi_{nj+n-1}}), \quad k,j=0,\ldots,N.
\end{equation}

\begin{proposition}
\label{prop:action-Pn-basis}
The operators $P^n$ act on the $M_n(\dC)$-bases $(e_k)_k,$ and $(\Phi_j)_j$ by 
\begin{align}
P^n\cdot \Phi_j &= \alpha_j \Phi_j, \qquad \alpha_j=\diag(\mu_{nj}^n, \ldots, \mu_{nj+n-1}^n ), \label{eq:alpha-P}\\
 P^n\cdot e_k&=A_{k+1}^\ast e_{k+1} + B_k e_k +A_k e_{k-1}. 
\end{align}
Moreover, $B_k=B_k^\ast$ for $k=0,\ldots, N,$ and $A_k$ is invertible and upper triangular for $k=1,\ldots,N$.  
\end{proposition}
\begin{proof}
    Since $P$ is a self-adjoint operator, so is $P^n$. Then Proposition \ref{pr:3TR-selfadjoint} tells us that the action of $P^n$ on $\Phi_j$ has the expected form if it is tridiagonal. The tridiagonality together with the fact that the $A_k$ are upper triangular is a consequence of the following formula which is obtained by induction
    \begin{equation}
    P^{n}\ket{k}=\sum_{\ell=-n}^{n}\alpha_{\ell,n}\ket{k+\ell},
    \end{equation}
    with  $\alpha_{n,n}=\prod_{i=1}^n a_{k+i}^*$ and $\alpha_{-n,n}=\prod_{i+0}^{n-1}a_{k-i}$. 
    The $j$th ($1\leq j\leq n$) coefficient on the diagonal of $A_k$ is then 
    \begin{equation}
    \prod_{i=0}^{n-1} a_{nk+j-1-i}\neq 0.
    \end{equation}
    This proves that $A_k$ is invertible. 
\end{proof}

\begin{remark}
    As seen in the proof, the explicit coefficients in $A_k$ and $B_k$ can be computed by induction from the action of $P$ on $\ket{k}$. However, the explicit expression for these coefficients quickly become difficult to handle when $n$ increase. 
\end{remark}
\begin{remark}
For the sake of simplicity we have used the operator $P^n$, but similar constructions can also be carried out by considering $q(P)$, where $q$ is a polynomial of degree $n$ with scalar coefficients. This provides a method for generating additional examples. An even more general situation arises by considering an operator $P$ acting $(2n+1)$-diagonally on $\ket{k}$, which is closely related to the ideas developed in \cite{DuranVanAssche95}. 
\end{remark}
\begin{remark}
In a probabilistic context, building on the foundational work of Karlin and McGregor, which established a deep connection between random walks with tridiagonal transition matrices and the theory of orthogonal polynomials, the authors in \cite{RomanMenchon} study an extension of this framework in which the transition matrix is given by a polynomial in a tridiagonal matrix, allowing transitions beyond nearest neighbors. The example provided in \cite{RomanMenchon} can be understood in our setting by considering an specific second degree polynomial $q(P)$, c.f. \cite[Sec 4.1]{RomanMenchon}. This example illustrates the significant potential of the algebraic framework developed in the present article.
\end{remark}

\begin{remark}
The case $n=1$ of the method simplifies to the scalar case \cite{GranovskiiZhedanov86,Zhedanov96}, where scalar orthonormal polynomials are recovered as transition coefficients: 
\begin{equation}
\label{eq:scalar-transition-coeff}
p_{k}(\mu_j)=\bra{k}\ket{\phi_j} w_j^{-\frac{1}{2}},\qquad \text{and}\quad  w_j=|\bra{0}\ket{\phi_j}|^2.
\end{equation}
\end{remark} 
\begin{proposition}\label{prop:Vandermonde}
\label{prop:L-R}Let $L(j)$ be the alternant matrix defined by 
\begin{equation}
\label{eq:alternant-general}
L(j)_{i,\ell}=p_{i-1}(\mu_{nj+\ell-1}), \qquad i,\ell=1,\ldots, n. 
\end{equation}
If $\mu_i\neq \mu_j$ for all $i\neq j$ then the following statements hold true:
\begin{enumerate}
    \item The matrix $L(j)$ is invertible.
    \item The family $(\theta_j)_j$ is given by $\theta_j=L(j)\alpha_j L(j)^{-1}$, where $\alpha_j$ is defined in \eqref{eq:alpha-P}, and satisfies the Vandermonde condition.
    \item The weight decomposes as $W(j)=L(j)D(j)L(j)^\ast,$ where $D(j)=\diag(w_{nj},\ldots,w_{nj+n-1}).$ 
    \item The matrix valued polynomials $R_k$ defined by
    $R_k(j)_{i,\ell}=p_{nk+i-1}(\mu_{nj+\ell-1}),$ $i,\ell=1,\ldots, n, $ 
    satisfy the orthogonality relation $\sum_j R_k(j)D(j)R_m(j)^\ast=\delta_{k,m}$.
    \item The polynomials $\Pi_k(x)$ satisfy $\Pi_k(\theta_j)=R_k(j)L(j)^{-1}.$
\end{enumerate}
\end{proposition}
\begin{proof}
\begin{enumerate}
\item The matrix $L(j)$ is an alternant matrix in which the $i$th row is a polynomial of degree  $i-1$ in $\mu_j$. It is known see \cite{Muir} that
$$\det L(j)=\beta_n \det V(\mu_{nj},\ldots, \mu_{nj+n-1}),$$
where $\beta_n$ is the product of the leading coefficients of the  polynomials $p_i$, and $V(\mu_{nj},\ldots, \mu_{nj+n-1})$ represents the Vandermonde matrix, which is invertible since $\mu_j\neq\mu_k$ for $j\neq k$.
\item The decomposition of the weight and the family $\theta_j$ follows directly from  the fact that the entries of the matrix $(e_0,\Phi_j)$ are given by
\begin{equation}
(e_0,\Phi_j)_{i,\ell}=\bra{i-1}\ket{\phi_{nj+\ell-1}}=p_{i-1}(\mu_{nj+\ell-1})h_{i-1}^{-\frac12}w^\frac12_{nj+\ell-1},    
\end{equation}
i.e., $(e_0,\Phi_j)=L(j)\diag(w_{nj}^\frac12,\ldots,w_{nj+n-1}^\frac12).$

The Vandermonde matrix $V(\theta_0,\ldots,\theta_j)$ can be rewritten as
\begin{equation}
\begin{pmatrix}I & I & \cdots & I \\
\theta_0 & \theta_1& \cdots & \theta_j \\
\vdots & \vdots & \ddots & \vdots \\
\theta_0^j &\theta_1^j & \cdots & \theta_j^j
\end{pmatrix}=\begin{pmatrix}L(0) & L(1)& \cdots & L(j) \\
L(0)\alpha_0 & L(1)\alpha_1& \cdots & L(j)\alpha_j \\
\vdots & \vdots & \ddots & \vdots \\
L(0)\alpha_0^j &L(1)\alpha_1^j & \cdots & L(j)\alpha_j^j
\end{pmatrix}\begin{pmatrix}
L(0)^{-1}&0&\cdots&0\\
0& L(1)^{-1}&\cdots&0\\
\vdots&\vdots&\ddots&\vdots\\
0&0&\cdots&L(j)^{-1}
\end{pmatrix}\,,
\end{equation}
thus $V(\theta_0,\ldots,\theta_j)$ being invertible is equivalent to 
\begin{equation}
\begin{pmatrix}L(0) & L(1)& \cdots & L(j) \\
L(0)\alpha_0 & L(1)\alpha_1& \cdots & L(j)\alpha_j \\
\vdots & \vdots & \ddots & \vdots \\
L(0)\alpha_0^j &L(1)\alpha_1^j & \cdots & L(j)\alpha_j^j
\end{pmatrix}
\end{equation}
being invertible. Let us show that the columns of the above $(nj+n)\times (nj+n)$ matrix are linearly independent. Thus we choose some scalars $a_\ell$ and the linear combination of the columns writes
\begin{align}
\sum_{\ell=0}^{nj+n-1}a_\ell p_s(\mu_\ell)\mu_\ell^{nk}&=0, \qquad s=0,\ldots,n-1,  \quad k=0,\ldots,j.
\end{align}
The above set of equations is equivalent to 
\begin{align}
\sum_{\ell=0}^{nj+n-1}a_\ell\mu_{\ell}^{nk+s}=0, \qquad s=0,\ldots,n-1,\quad k=0,\ldots,j.
\end{align}
Writing these equations in matrix form lead to 
\begin{equation}
\begin{pmatrix}
1&\ldots&1\\
\mu_0&\ldots&\mu_{nj+n-1}\\
\vdots&\ddots&\vdots\\
\mu^{nj+n-1}_0&\ldots&\mu^{nj+n-1}_{nj+n-1}\end{pmatrix}\begin{pmatrix}
a_0\\
a_1\\
\vdots\\
a_{nj+n-1}
\end{pmatrix}=\begin{pmatrix}
0\\
0\\
\vdots\\
0
\end{pmatrix}\,,
\end{equation}
and we conclude that $a_i=0$ for all $i=0,\ldots, nj+n-1$ since $\mu_j\neq \mu_k$ for $j\neq k$. Thus $V(\theta_0,\ldots,\theta_j)$ is invertible and the Vandermonde condition is satisfied. 
\item The orthogonality of the polynomials $(R_k)_k$ follows by noticing 
\begin{align}
(\sum_jR_k(j)D(j)R_m(j)^\ast)_{i,\ell}&=\sum_j\sum_{s=1}^np_{nk+i-1}(\mu_{nj+s-1})p_{nm+\ell-1}(\mu_{nj+s-1})w_{nj+s-1}\nonumber\\
    &=\sum_rp_{nk+i-1}(\mu_r)p_{nm+\ell-1}(\mu_r)w_r\nonumber\\
    &=\delta_{nk+i,nm+\ell}.
\end{align}
Since $nk+1\leq nk+i\leq n(k+1),$ $nm+1\leq nm+\ell\leq n(m+1)$, if $k<m$ we have $n(k+1)<nm+1$ and then $\delta_{nk+i,nm+\ell}=\delta_{k,m}\delta_{i,\ell}.$ 
\item Finally, since
\begin{equation}
    (e_k,\phi_j)_{i,\ell}=\bra{nk+i-1}\ket{\phi_{nj+\ell-1}}=p_{nk+i-1}(\mu_{nj+\ell-1})w_{nj+\ell-1}^\frac12,
\end{equation}
we get
\begin{equation}
    \Pi_k(\theta_j)=(e_k,\phi_j)(e_0,\phi_j)^{-1}=R_k(j)L(j)^{-1}.
\end{equation}
This concludes the proof of the proposition. 
\end{enumerate}
\end{proof}
Despite having a simpler orthogonality relation we remark that the polynomial $R_0$ is not constant, and thus the family $R_k$ does not define a $M_n(\dC)$-basis of the module of matrix valued orthogonal polynomials.

%%%%%%%%%%%%%%%%%%%%%%%%%%%%%%%%%
%%%%%%%%%%%%%%%%%%%%%%%%%%%%%%%%%
%%%%%%%%%%%%%%%%%%%%%%%%%%%%%%%%%
%%%%%%%%%%%%%%%%%%%%%%%%%%%%%%%%%

\section{Krawtchouk type polynomials}
In \cite{GranovskiiZhedanov86}, Krawtchouk polynomials are expressed as transition coefficients between eigenbases related to the Lie algebra $\mathfrak{su}(2)$. Starting from this fact we construct matrix analogues of Krawtchouk polynomials. Generators of $\mathfrak{su}(2)$ satisfy the commutation relations
\begin{equation}
    [H,L^+]=2L^+,\qquad [H,L^-]=-2L^-,\qquad [L^+,L^-]=H.
\end{equation}
Its $m+1$ dimensional representation can be described as follows on the vector space $V$ with basis $\ket{k}$
\begin{align}
H \ket{k}=\lambda_k\ket{k},&\qquad \lambda_k=2k-m,\qquad k=0,\ldots,m, \\
L^{+} \ket{k}=\rho^+_k\ket{k+1},&\qquad \rho^+_k=\sqrt{(k+1)(m-k)},\\
L^{-} \ket{k}=\rho^-_k\ket{k-1},&\qquad \rho^-_k=\sqrt{k(m+1-k)},
\end{align}
observe that $\rho_k^+=\rho_{k+1}^-.$ Clearly, the operator $P=\cos(a)H+\sin(a)(L^++L^-)$ acts tridiagonally on $\ket{k},$ 
\begin{equation}
    P\ket{k}=\sin(a) \rho_k^+ \ket{k+1}+ \cos(a)\lambda_k\ket{k}+\sin(a)\rho_k^-\ket{k-1}.
\end{equation}
It is known that its spectrum is discrete 
\begin{equation}
\label{eq:spectrum-P}
P\ket{\phi_j}=\mu_j\ket{\phi_j},\qquad \mu_j=2j-m,\qquad j=0,\ldots,m, 
\end{equation}
and that the operator $H$ acts tridiagonally on $\ket{\phi_j}$
\begin{equation}
    H \ket{\phi_j}=\sin(a)\rho^+_j\ket{\phi_{j+1}}+\cos(a)\lambda_j\ket{\phi_j}+\sin(a)\rho^-_{j}\ket{\phi_{j-1}}.
\end{equation}
Krawtchouk polynomials are defined by
\begin{equation}
K_{k}(j;p,m)=\rFs{2}{1}{-j,-k}{-m}{\frac{1}{p}},
\end{equation}
and can be recovered as transition coefficients between the two bases $\ket{k}$ and $\ket{\phi_j}$
\begin{equation}
\label{eq:scalar-krawtch-transition-coeff}
K_{k}(j;p,m)=\bra{k}\ket{\phi_j}h_k^\frac12 w_j^{-\frac{1}{2}},\qquad j,k=0,\ldots,m,
\end{equation}
where
\begin{equation}
\label{eq:weight-norm-scalar-krawt}
w_j=\binom{m}{j}p^j(1-p)^{m-j}, \qquad h_k=\frac{(-1)^kk!}{(-m)_k}\left(\frac{1-p}{p}\right)^k, \qquad \text{and}\qquad p=\frac{1}{2}(1+\cos(a)).
\end{equation}

%%%%%%%%%%%%%%%%%%%%%%%%%%%
%%%%%%%%%%%%%%%%%%%%%%%%%%%
%%%%%%%%%%%%%%%%%%%%%%%%%%%
%%%%%%%%%%%%%%%%%%%%%%%%%%%
\subsection{A $n\times n$ Krawtchouk type}
\label{sec:nxn-example-krawtchouk}
We consider a representation $V$ of dimension $n(N+1)$, i.e. we take $m=nN+n-1$. By Corollary \ref{cor:free-ndivdimV}, the $M_n(\dC)$-module $V^n$ is free and we have the following $M_n(\mathbb{C})$-bases for $V^n$
\begin{equation}
    e_k=(\ket{nk},\ldots,\ket{nk+n-1}), \qquad \Phi_j=(\ket{\phi_{nj}}, \ldots, \ket{\phi_{nj+n-1}}), \qquad k,j=0,\ldots,N.
\end{equation}
The actions of the operators $P^n$ and $H^n$ on the $M_n(\dC)$-bases $(e_k),$ and $(\Phi_j)$ are described in Proposition \ref{prop:action-Pn-basis}. As discussed in Section \ref{sec:algebraic-interpretation-mvop}, the construction of MVOPs can proceed provided the matrices $(e_0,\Phi_j)$ are invertible. We can verify this by examining the entries of the matrix, which are given by
\begin{equation}
\label{eq:e0-phij-alternant}
(e_0,\Phi_j)_{i,\ell}=\bra{i-1}\ket{\phi_{nj+\ell-1}} =K_{i-1}(nj+\ell-1;p,nN+n-1)h_{i-1}^{-\frac12}w^\frac12_{nj+\ell-1} \qquad i,\ell=1,\ldots, n.
\end{equation}
As shown in Proposition \ref{prop:L-R}, the $n\times n$ matrix $L(j)$ is defined as
\begin{equation}
\label{eq:alternant-krawtchouk}
L(j)_{i,\ell}=h_{i-1}^{-\frac12}K_{i-1}(nj+\ell-1;p,nN+n-1), \qquad i,\ell=1,\ldots, n, \quad j=0,\ldots,N,
\end{equation}
is an invertible alternant matrix. 
%Each row of $L(j)$ is a polynomial of degree  $i-1$ in $j$. It is known see \cite{Muir} that
%\begin{equation}
%    \det L(j)=p_n \det V(nj,\ldots, nj+n-1),
%\end{equation}
%where $p_n$ is the product of the leading coefficients and the inverse of the norms of the Krawtchouk polynomials, and $V(nj,\ldots, nj+n-1)$ represents the Vandermonde matrix. The determinant of $V$ is given by
%\begin{equation}
%    \det V(nj,\ldots, nj+n-1)=(n-1)!(n-2)!\ldots 2! 1!.
%\end{equation}    
Moreover, from equation \eqref{eq:e0-phij-alternant} we can express
\begin{equation}
    (e_0,\Phi_j)=L(j)\diag(w^\frac12_{nj} ,\ldots,w^\frac12_{nj+n-1}),
\end{equation}
and since the matrix $L(j)$ is invertible, we conclude that  $( e_0,\Phi_j)$ is also invertible.
Since the matrices $A_k$ constructed from Proposition \ref{prop:action-Pn-basis} are also invertible, we can proceed as outlined in Section \ref{sec:algebraic-interpretation-mvop} and obtain the family of MVOPs $\Pi_k(x)$ satisfying the recurrence relation \begin{equation}
\Pi_k(\theta_j)\theta_j=A_{k+1}^*\Pi_{k+1}(\theta_j)+B_k\Pi_k(\theta_j)+A_k\Pi_{k-1}(\theta_j), \qquad 
\Pi_{-1}(\theta_j)=0,\qquad \Pi_0(\theta_j)=I_n,
\end{equation}
where $\theta_j=( e_0, \Phi_j) \alpha_j^*( e_0, \Phi_j)^{-1}.$
Additionally, the following orthogonality relation holds
\begin{equation}
\label{eq:orth-krawtchouk-nxn}
\sum_{j=0}^N \Pi_{k}(\theta_j)W(j)\Pi_{\ell}(\theta_j)^\ast=\delta_{k,\ell},\qquad W(j)=L(j)D(j)L(j)^\ast,
\end{equation}
where the matrix $L(j)$ is defined in \eqref{eq:alternant-krawtchouk}, $D(j)=\diag(w_{nj}, \ldots, w_{nj+n-1}),$ $w_j$ as in \eqref{eq:weight-norm-scalar-krawt} for $m=nN+n-1$. Moreover, the polynomials satisfy the difference equation 
\begin{multline}
\Lambda_k\Pi_k(\theta_j)=\Pi_k(\theta_{j+1})(e_0,\Phi_{j+1})A_{j+1}(e_0,\Phi_j)+\Pi_k(\theta_{j})(e_0,\Phi_{j})B_{j}(e_0,\Phi_j)\\+\Pi_k(\theta_{j-1})(e_0,\Phi_{j-1})A_{j}^\ast(e_0,\Phi_j). 
\end{multline}

We conclude this section by presenting explicit expressions in the $2\times2$ case. The alternant matrix $L(j)$ from equation \eqref{eq:alternant-krawtchouk} is
\begin{equation}
    L(j)=\begin{pmatrix}
1&1\\
h_{1}^{-\frac{1}{2}}K_1(2j;p,2N+1)&h_{1}^{-\frac{1}{2}}K_1(2j+1;p,2N+1)
\end{pmatrix},
\end{equation}
and its inverse is 
\begin{equation}
    L(j)^{-1}=-p(2N+1)\begin{pmatrix}
K_1(2j+1;p,2N+1)&-h_{1}^{\frac{1}{2}}\\
-K_1(2j;p,2N+1)&h_{1}^{\frac{1}{2}}
\end{pmatrix}.
\end{equation}
The weight function is $W(j)=L(j)\diag(w_{2j},w_{2j+1})L(j)^\ast.$ The matrix $A_k$ on the recurrence relation and difference equation is given by
\begin{equation}
    A_{k}= \begin{pmatrix}
\sin(a)^2\rho^-_{2 k} \rho^-_{2 k-1} & \sin(a)\cos(a)\rho^-_{2 k}(\lambda_{2 k}+\lambda_{2 k -1})  
\\
0 & \sin(a)^2\rho^-_{2 k}\rho^-_{2 k +1}
\end{pmatrix},
\end{equation}
and the matrix $B_k$ is given by
\begin{equation}
    B_k=\begin{pmatrix}
\sin(a)^2 \rho_{2 k}^{-2} +\sin(a)^2 \rho _{2 k+1}^{-2} +\cos(a)^{2} \lambda_{2 k}^{2}& \sin(a)\cos(a)(\lambda_{2 k} \rho_{2 k+1}^- + \rho_{2 k+1}^-\lambda_{2 k +1})
\\
 \sin(a)\cos(a)(\lambda_{2 k} \rho_{2 k+1}^- + \rho_{2 k+1}^-\lambda_{2 k +1}) & \sin(a)^2 \rho_{2 k +1}^{-2}+\sin(a)^2\rho_{2 k+2}^{-2}+\cos(a)^2 \lambda_{2 k +1}^{2}
\end{pmatrix},
\end{equation}
where $
\lambda_k=2k-2N-1$ and $ \rho^-_k=\sqrt{k(2N+2-k)}$.
Finally, the MVOPs are given by $\Pi_k(\theta_j)=R_k(j)L(j)^{-1},$ where 
\begin{equation}
    R_k(j)=\begin{pmatrix}
K_{2k}(2j)h_{2k}^{-\frac{1}{2}}&K_{2k}(2j+1)h_{2k}^{-\frac{1}{2}}\\
K_{2k+1}(2j)h_{2k+1}^{-\frac{1}{2}}&K_{2k+1}(2j+1)h_{2k+1}^{-\frac{1}{2}}
\end{pmatrix},
\end{equation}
where $h_k$ is defined in \eqref{eq:weight-norm-scalar-krawt}, $m=2N+1$ and $\theta_j=L(j)\diag(\mu_{2j}^2,\mu_{2j+1}^2)L(j)^{-1}$.

%%%%%%%%%%%%%%%%%%%%%%%%%%%
%%%%%%%%%%%%%%%%%%%%%%%%%%%
%%%%%%%%%%%%%%%%%%%%%%%%%%%
%%%%%%%%%%%%%%%%%%%%%%%%%%%

\subsection{Another $2\times 2$ Krawtchouk type }

In this subsection we exhibit another set of generators which leads to another Krawtchouk $2\times 2$ matrix analog. We consider $V$ a $2(N+1)$ dimensional vector space so that the $M_2(\dC)$-module $V^2$ is free. Let us consider the $M_2(\dC)$-bases of $V^2$ given by
\begin{equation}
    f_k=(\ket{k},\ket{k+N+1}), \qquad \Phi_j=(\ket{\phi_{j}}, \ket{\phi_{j+N+1}}),\quad j,k=0,\ldots,N.
\end{equation}
Notice that considering vectors of the form
\begin{equation}
        f_k=(\ket{k},\ket{k+N+1},\cdots, \ket{k+(n-1)(N+1)}),\qquad j,k=0,\ldots,N,
\end{equation}
and similarly for $\Phi_j$, will lead to a similar $n\times n $ example. However, for sake of simplicity we stick to the $n=2$ case.

The operators $P^2$ and $H^2$ act on $\Phi_j$ and $f_k$ as described in Proposition \ref{prop:action-Pn-basis}
where the matrices $A_k$ and $B_k$ are explicitly given by
\begin{equation}
A_k=\begin{pmatrix}
\rho^-_{k}&0\\0&\rho^-_{k+N+1}\end{pmatrix}, \qquad B_k=\begin{pmatrix}
\lambda_{k}&0\\0&\lambda_{k+N+1} \end{pmatrix}.    
\end{equation}

The matrix $(f_0,\Phi_j)$ is given by 
\begin{equation}
(f_0,\Phi_j)=\begin{pmatrix}
1&1\\
K_{N+1}(j)h_{N+1}^{-\frac{1}{2}}&K_{N+1}(j+N+1)h_{N+1}^{-\frac{1}{2}}
\end{pmatrix} \begin{pmatrix}
w_{j}^{\frac{1}{2}}&0\\
0&w_{j+N+1}^{\frac{1}{2}}
\end{pmatrix},
\end{equation}
where $K_{N+1}(j)=K_{N+1}(j;p;2N+1)$. Using the hypergeometric expansion of Krawtchouk polynomials, one checks that $K_{N+1}(j+N+1)-K_{N+1}(j)$ is a non zero polynomial in $1/p$. Thus, except for possibly a finite number of $p$, the alternant matrix 
\begin{equation}
\label{eq:L-2nd-Krawt}
L(j)=\begin{pmatrix}
1&1\\
K_{N+1}(j)h_{N+1}^{-\frac{1}{2}}&K_{N+1}(j+N+1)h_{N+1}^{-\frac{1}{2}}
\end{pmatrix},
\end{equation}
 is invertible, and so is $(f_0,\Phi_j)$ for all $j=0,\ldots,N$. Notice that numerical computation showed that there exists $0<p<1$ such that $K_{N+1}(j+N+1)-K_{N+1}(j)=0$ thus we have to assume it is not the case. The inverse of $L(j)$ is then given by 
\begin{equation}
L(j)^{-1}=\frac{1}{K_{N+1}(j+N+1)-K_{N+1}(j)}\begin{pmatrix}
K_{N+1}(j+N+1)&-h_{N+1}^{\frac{1}{2}}\\
-K_{N+1}(j)&h_{N+1}^{\frac{1}{2}}
\end{pmatrix}.
\end{equation}
Using similar arguments than for Proposition \ref{prop:Vandermonde}, up to a permutation of the lines, one can check that the family $\theta_j=L(j)\alpha_jL(j)^-1$ satisfy the Vandermonde condition, where $\alpha_j=\begin{pmatrix}
    \mu_j&0\\0&\mu_{j+N+1}
\end{pmatrix}$. We can thus proceed as in Section \ref{sec:algebraic-interpretation-mvop} and obtain the family $\Pi_k(x)$ of MVOPs, satisfying the recurrence relation 
 \begin{equation}
\Pi_k(\theta_j)\theta_j=A_{k+1}^*\Pi_{k+1}(\theta_j)+B_k\Pi_k(\theta_j)+A_k\Pi_{k-1}(\theta_j), \qquad 
\Pi_{-1}(\theta_j)=0,\qquad \Pi_0(\theta_j)=I_n,
\end{equation}
where $\theta_j=( f_0, \Phi_j) \theta_j'( f_0, \Phi_j)^{-1}.$ Additionally the polynomials satisfy  the difference equation 
\begin{multline}
\Lambda_k\Pi_k(\theta_j)=\Pi_k(\theta_{j+1})(f_0,\Phi_{j+1})A_{j+1}(f_0,\Phi_j)+\Pi_k(\theta_{j})(f_0,\Phi_{j})B_{j}(f_0,\Phi_j)\\+\Pi_k(\theta_{j-1})(f_0,\Phi_{j-1})A_{j}^\ast(f_0,\Phi_j). 
\end{multline}
Moreover, the following orthogonality relation holds
\begin{equation}
    \sum_{j=0}^N \Pi_{k}(\theta_j)W(j)\Pi_{\ell}(\theta_j)^\ast=\delta_{k,\ell}, \qquad W(j)=L(j)D(j)L(j)^\ast,
\end{equation} 
where $L(j)$ is defined in \eqref{eq:L-2nd-Krawt}, $D(j)=\diag(w_j,w_{j+N+1})$
and $\theta_j=L(j)\diag(\mu_j,\mu_{j+N+1})L(j)^{-1}.$

\begin{remark}
The Krawtchouk type example given in this section is not equivalent to the one given in Section \ref{sec:nxn-example-krawtchouk}. A proof of this fact can be carried out by showing that if $P$ is a constant matrix such that $W_2(x)=PW_1(x)P^\ast$ then $P=0.$
\end{remark}

%%%%%%%%%%%%%%%%%%%%%%%%%%%%%%%%%
%%%%%%%%%%%%%%%%%%%%%%%%%%%%%%%%%
%%%%%%%%%%%%%%%%%%%%%%%%%%%%%%%%%
%%%%%%%%%%%%%%%%%%%%%%%%%%%%%%%%%

\section{Discrete Chebyshev type polynomials}
\label{discrete Chebyshev}
We construct a last example to show that the procedure developed in this article is also suited to create $q$ deformed MVOPs. This will be based on the $q$-deformed Lie algebra $\mathfrak{so}_q(3)$ and the work done in \cite{Zhedanov96}.  We consider $N$ a positive integer, $\omega=\frac{\pi}{N}$, and  $\mathfrak{so}_q(3)$ at $q=e^{2i\omega}$  a root of unity. The generators of this $q$-deformed algebra satisfy the commutation relations
\begin{equation}
    [K_0,K_1]_\omega=K_2, \quad [K_1,K_2]_\omega=-K_0,\quad [K_0,K_2]_\omega=K_1,
\end{equation}
where $[A,B]_\omega = e^{\frac{i\omega}{2}}AB-e^{-\frac{i\omega}{2}}BA,$ denotes the symmetric $q$-commutator. For $d = 1, 2,\ldots,N-1$, there is a unitary $(d+1)$-dimensional  irreducible representation $(V,\rho)$ of $\mathfrak{so}_q(3)$  in which the operator $K_0$ is diagonal and $K_1$ two diagonal 
\begin{align}
&K_0 \ket{k}=\lambda_k\ket{k}, \quad k=0,1,\ldots,d, \\
&K_1 \ket{k}= \rho_{k+1}\ket{k+1}+\rho_k\ket{k-1},
\end{align}
where
\begin{equation}
\lambda_k=\frac{\cos\omega(k+\beta)}{\sin \omega},\qquad \text{and}\qquad \rho_k=\sqrt{\frac{\sin \omega k \sin\omega(k+2 \beta-1)}{4\sin^2\omega \sin\omega(k+\beta)\sin \omega (k+\beta-1)}},
\end{equation}
with $2\beta=N-d$. 
Given $b\in\mathbb{R}$, we consider the operator $P=K_1+b$ which acts tridiagonally on $\ket{k}$
\begin{equation}
    P\ket{k}=\rho_{k+1}\ket{k+1}+b\ket{k}+\rho_k\ket{k-1}.
\end{equation}
The operator $K_1$ has the same spectrum as $K_0$ \cite{Zhedanov96,Crampe2021}, and therefore $P$ is diagonalized as follows
\begin{equation}
    P\ket{\phi_j}=\mu_j\ket{\phi_j}, \qquad k=0,1,\ldots,d, \quad \mu_j=\frac{\cos \omega(j+\beta)}{\sin\omega}+b,
\end{equation}
The operator $H=K_0$ acts tridiagonally on $\ket{\phi_j}$ 
\begin{equation}
H\ket{\phi_j}=\rho_{j+1}\ket{\phi_{j+1}}+\rho_j\ket{\phi_{j-1}}.
\end{equation}
The monic $q$-ultraspherical polynomials at $q$ a root of unity can be recovered as the overlaps
\begin{equation}
    P_k(x_j)=\bra{k}\ket{\phi_j}h_k^{\frac{1}{2}}w_j^{-\frac{1}{2}},\qquad x_j=2\cos\omega(j+\beta)
\end{equation}
where
\begin{equation}
\label{eq:weight-norm-scalar-q-cheb}
w_j=\sin\omega(j+\beta)\prod_{\ell=1}^{2\beta-1}\sin \omega (j+\ell),\qquad  s=0,\ldots ,d .
\end{equation}
The formulas for $h_k$ are more involved and can be found in \cite{Zhedanov96} (formulas (30), (31)). 

It is interesting to realize that the finite Chebyshev are a special case of these
$q$-polynomials, taking $\beta=1$ and with $\rho_{k+1}^+$ becoming $\frac{1}{2}$. Moreover, the finite Chebyshev polynomials are given by
\begin{equation}
    P_n(x_j)=\frac{\sin \omega (n+1)(j+1)}{\sin \omega (j+1)}, \qquad j=0,1,\ldots, d,
\end{equation}
and the weight function is 
\begin{equation}
    w_j=\sin^2 \omega(j + 1).
\end{equation}

\subsection{A $2\times 2$ discrete Chebyshev type}
We now construct $2\times 2$ discrete Chebyshev type polynomials, which can be generalized to $n\times n$ as in Section \ref{sec:nxn-example-krawtchouk} and Section \ref{sec:nxn-example-meixner}. We consider the $(d+1)$-dimensional irreducible representation $(V,\rho)$ of $\mathfrak{so}_q(3)$, and we assume $2|d+1,$ i.e., $d+1=2(m+1).$ 
We consider the $M_2(\mathbb{C})$ basis of $V^2$ given by 
\begin{equation}
    e_k=(\ket{2k},\ket{2k+1}), \qquad \Phi_j=(\ket{\phi_{2j}}, \ket{\phi_{2j+1}}), \qquad k,j=0,\ldots,m.
\end{equation}
The action of the operators $P^2$ and $H^2$ on the $M_n(\dC)$-bases $(e_k),$ and $(\Phi_j)$ is outlined in Proposition \ref{prop:action-Pn-basis}. Specifically, we have the following:
\begin{equation}
    P^2\cdot \Phi_j = \alpha_j \Phi_j,\qquad H^2\cdot e_k=\Lambda_k e_k,
\end{equation}
where $\theta'_j=\diag(\mu_{2j}^2,\mu_{2j+1}^2 ),$ $\Lambda_k=\diag(\lambda_{2j}^2, \lambda_{2j+1}^2 )$ and
\begin{align}
 P^2\cdot e_k&=A_{k+1}^\ast e_{k+1} + B_k e_k +A_k e_{k-1},\\
 H^2\cdot \Phi_j &= A_{k+1}^\ast\Phi_j+B_k\Phi_j+A_k\Phi_{j-1}.
\end{align}
Here, the matrix $A_k,$ is invertible for $k=1,\ldots,N,$ $B_k=B_k^\ast.$ The matrices $A_k$ and $B_k$ are given by
\begin{align}
A_{k}&= \begin{pmatrix}
\rho_{2 k} \rho_{2 k-1} & 2b\rho_{2 k}
\\
0 & \rho_{2 k}\rho_{2 k +1}
\end{pmatrix}, \qquad k=1,\ldots,N,
\\
B_k&=\begin{pmatrix}
 \rho_{2 k}^2 + \rho _{2 k+1}^2 +b^{2}& 2b\rho_{2 k+1} 
\\
 2b\rho_{2 k+1} &  \rho_{2 k +1}^2+ \rho_{2 k+2}^2+b^{2}
\end{pmatrix}\qquad k=0,\ldots,N,
\end{align}
Moreover, in the discrete Chebyshev case $A_k$ and $B_k$ are independent of $k$ and are given by
\begin{equation}
    A=\begin{pmatrix}
\frac{1}{4}&-b\\
0&\frac{1}{4}
\end{pmatrix},\qquad \text{and}\qquad B=\begin{pmatrix}
\frac{1}{2}+b^2&-b\\
-b&\frac{1}{2}+b^2
\end{pmatrix}.
\end{equation}
The construction of MVOPs can be carried out provided the matrices $(e_0,\Phi_j)$ are invertible. We can verify that this is so by noting that their entries are given by 
\begin{equation}
    (e_0,\Phi_j)_{i,\ell}=\bra{i-1}\ket{\phi_{nj+\ell-1}} = P_{i-1}(x_{nj+\ell-1})h_{i-1}^{-\frac{1}{2}}w_{nj+\ell-1}^\frac12 \qquad i,\ell=1,2 
\end{equation}
which implies that
\begin{equation}
    (e_0,\Phi_j)=L(j)\diag(w_{2j}^\frac12,w_{2j+1}^\frac12),
\end{equation}
where $L(j)$ is the alternant matrix 
\begin{equation}\label{eq:alternant-Chebyshev}
L(j)=\begin{pmatrix}h_{0}^{-\frac{1}{2}}&0\\
0&h_{1}^{-\frac{1}{2}}
\end{pmatrix}\begin{pmatrix}
1&1\\
P_1(x_{2j})&P_1(x_{2j+1})
\end{pmatrix}.
\end{equation}
Observe that since $\omega=\frac{\pi}{N},$ and $\beta \in \dN$ the matrix $L(j)$ is invertible. Thus $(e_0,\phi_j)$ is invertible.  The inverse of $L(j)$ is 
\begin{equation}
    L(j)^{-1}=\frac{1}{2(\cos\omega(2j+\beta)-\cos\omega(2j+1+\beta))}\begin{pmatrix}
1&1\\
P_1(x_{2j})&P_1(x_{2j+1})
\end{pmatrix}.\begin{pmatrix}h_{0}^{-\frac{1}{2}}&0\\
0&h_{1}^{-\frac{1}{2}}
\end{pmatrix}.
\end{equation}
We then proceed as in Section \ref{sec:algebraic-interpretation-mvop} and obtain the family of MVOPs $\Pi_k(x)$ that satisfy the recurrence relation 
 \begin{equation}
\Pi_k(\theta_j)\theta_j=A_{k+1}^*\Pi_{k+1}(\theta_j)+B_k\Pi_k(\theta_j)+A_k\Pi_{k-1}(\theta_j), \qquad 
\Pi_{-1}(\theta_j)=0,\qquad \Pi_0(\theta_j)=I_n,
\end{equation}
 where $\theta_j=( e_0, \Phi_j) \alpha_j^*( e_0, \Phi_j)^{-1}.$ Additionally, the polynomials satisfy the orthogonality relation
\begin{equation}
    \sum_{j=0}^N \Pi_{k}(\theta_j)W(j)\Pi_{\ell}(\theta_j)^\ast=\delta_{k,\ell},\qquad W(j)=L(j)D(j)L(j)^\ast,
\end{equation} 
where the matrix $L(j)$ is defined in \eqref{eq:alternant-Chebyshev} and $D(j)=\diag(w_{2j}, w_{2j+1}).$ These polynomials moreover satisfy the difference equation
\begin{multline}
   \Lambda_k\Pi_k(\theta_j)=\Pi_k(\theta_{j+1})(e_0,\Phi_{j+1})A_{j+1}(e_0,\Phi_j)+\Pi_k(\theta_{j})(e_0,\Phi_{j})B_{j}(e_0,\Phi_j)\\+\Pi_k(\theta_{j-1})(e_0,\Phi_{j-1})A_{j}^\ast(e_0,\Phi_j). 
\end{multline}
Finally, the MVOPs are given by $\Pi_k(\theta_j)=R_k(j)L(j)^{-1},$ where
\begin{equation}
    R_k(j)=\begin{pmatrix}
h_{2k}^{-\frac{1}{2}}P_{2k}(x_{2j})&h_{2k}^{-\frac{1}{2}}P_{2k}(x_{2j+1})\\
h_{2k+1}^{-\frac{1}{2}}P_{2k+1}(x_{2j})&h_{2k+1}^{-\frac{1}{2}}P_{2k+1}(x_{2j+1})
\end{pmatrix}.
\end{equation}
%%%%%%%%%%%%%%%%%%%%%%%%%%%%%%
%%%%%%%%%%%%%%%%%%%%%%%%%%%%%%
%%%%%%%%%%%%%%%%%%%%%%%%%%%%%%%

\section{Meixner type polynomials}
While the theory developed in this article focuses on finite-dimensional representations, we present an instructive infinite-dimensional example, noting that the theoretical framework necessary for its rigorous treatment lies beyond the scope of this paper and would require an adequate treatment of unbounded operators. We consider the Lie algebra $\mathfrak{su}(1,1)$ which will give matrix valued analogues of Meixner polynomials. Its generators satisfy the commutation relations
\begin{equation}
    [H,L^+]=L^+, \qquad [H,L^-]=-L^-, \qquad [L^-,L^+]=2H.
\end{equation}
We consider a discrete series representation $V$, which is described on the basis $\ket{k}$ by
\begin{align}
H \ket{k}=\lambda_k\ket{k},&\qquad \lambda_k=\frac{\beta}{2}+k, \qquad \beta>0,\quad k=0,1,\ldots \\
L^{+} \ket{k}=\rho^+_k\ket{k+1},&\qquad \rho^+_k=\sqrt{(k+1)(\beta+k)},\\
L^{-} \ket{k}=\rho^-_k\ket{k-1},&\qquad \rho^-_k=\sqrt{k(\beta+k-1)}.
\end{align}
As in the previous section we have $\rho^+_k=\rho_{k+1}^-.$ Given $a,b\in\mathbb{R}$, we consider the operator $P=-\frac{1}{2}\sinh(a)(L^++L^-)+\cosh(a)H$ which acts tridiagonally on $\ket{k}$
\begin{equation}
    P\ket{k}=-\frac{1}{2}\sinh(a) \rho_k^+ \ket{k+1}+\cosh(a) \lambda_k\ket{k}-\frac{1}{2}\sinh(a)\rho_k^-\ket{k-1}.
\end{equation}
The spectrum of the operator $P$ is 
\begin{equation}
\label{eq:spectrum-P-meixner}
P\ket{\phi_j}=\mu_j\ket{\phi_j},\qquad \mu_j=\frac{\beta}{2}+j,\qquad j=0,1\ldots
\end{equation}
and the operator $H$ acts tridiagonally on $\ket{\phi_j}$
\begin{equation}
    H\ket{\phi_j}=-\frac{1}{2}\sinh(a)\rho^+_j\ket{\phi_{j+1}}+\cosh(a)\lambda_j\ket{\phi_j}-\frac{1}{2}\sinh(a)\rho_j^-\ket{\phi_{j-1}}.
\end{equation}
Meixner polynomials are defined by
\begin{equation}
M_{k}(j;\beta,c)=\rFs{2}{1}{-j,-k}{\beta}{1-\frac{1}{c}},
\end{equation}
and can be recovered as transition coefficients between this basis
\begin{equation}
    M_{k}(j;\beta,c)=\bra{k}\ket{\phi_j}h_k^\frac{1}{2}w_j^{-\frac{1}{2}},\qquad  j,k=0,1,\ldots
\end{equation} 
where
\begin{equation}
\label{eq:weight-norm-scalar-meixner}
w_j=\frac{(\beta)_j}{j!}c^j, \qquad h_k=\frac{k!}{(\beta)_k(1-c)^{\beta} c^k},\qquad \text{and}\qquad c=\left(\frac{\cosh(a)-1}{\sinh(a)}\right)^2.
\end{equation}

%%%%%%%%%%%%%%%%%%%%%%%%%%%%%%%%%%%%
%%%%%%%%%%%%%%%%%%%%%%%%%%%%%%%%%%%%
%%%%%%%%%%%%%%%%%%%%%%%%%%%%%%%%%%%%
%%%%%%%%%%%%%%%%%%%%%%%%%%%%%%%%%%%%

\subsection{A $n\times n$ Meixner type} 
\label{sec:nxn-example-meixner}

The representation $V$ has countable dimension, and we consider the $M_n(\dC)$-bases of $V^n$ given by
\begin{equation}
    e_k=(\ket{nk},\ldots,\ket{nk+n-1}), \qquad \Phi_j=(\ket{\phi_{nj}}, \ldots, \ket{\phi_{nj+n-1}}), \qquad  k,j=0,1\ldots
\end{equation}
The actions of the operators $P^n$ and $H^n$ on the $M_n(\dC)$-bases $(e_k),$ and $(\Phi_j)$ are described in Proposition \ref{prop:action-Pn-basis}. 
The matrix entries of $(e_0,\Phi_j)$ are given by 
\begin{equation}
    (e_0,\Phi_j)_{i,\ell}=\bra{i-1}\ket{\phi_{nj+\ell-1}} = M_{i-1}(nj+\ell-1;c,\beta)h_{i-1}^{-\frac{1}{2}}w_{nj+\ell-1}^\frac12 \qquad i,\ell=1,\ldots, n. 
\end{equation}
The invertibility of $(e_0,\Phi_j)$ is proved as in the $\mathfrak{su}(2)$ case using the $n\times n$ matrix $L(j)$ defined as
\begin{equation}
\label{eq:alternant-meixner}
L(j)_{i,\ell}=h_{i-1}^{-\frac{1}{2}}M_{i-1}(nj+\ell-1;c,\beta), \qquad i,\ell=1,\ldots, n.
\end{equation}
from which we can express
\begin{equation}
(e_0,\Phi_j)=L(j)\diag(w_{nj}^\frac12,\ldots,w_{nj+n-1}^\frac12).
\end{equation} 
Since the matrices $A_k$ constructed using Proposition \ref{prop:action-Pn-basis} are invertible, we can proceed as outlined in Section \ref{sec:algebraic-interpretation-mvop} and obtain the family of MVOPs $\Pi_k(x)$ that satisfy the recurrence relation 
 \begin{equation}
\Pi_k(\theta_j)\theta_j=A_{k+1}^*\Pi_{k+1}(\theta_j)+B_k\Pi_k(\theta_j)+A_k\Pi_{k-1}(\theta_j), \qquad 
\Pi_{-1}(\theta_j)=0,\qquad \Pi_0(\theta_j)=I_n,
 \end{equation}
 where $\theta_j=( e_0, \Phi_j) \alpha_j^*( e_0, \Phi_j)^{-1}.$ The family $\Pi_k(x)$ satisfy the orthogonality relation
\begin{equation}
\label{eq:orth-meixner-nxn}
\sum_{j=0}^\infty \Pi_{k}(\theta_j)W(j)\Pi_{\ell}(\theta_j)^\ast=\delta_{k,\ell},\qquad W(j)=L(j)D(j)L(j)^\ast,
\end{equation} 
where the matrix $L(j)$ is defined in \eqref{eq:alternant-krawtchouk} and $D(j)=\diag(w_{nj}, \ldots, w_{nj+n-1}).$ Moreover, the polynomials satisfy the difference equation
\begin{multline}
   \Lambda_k\Pi_k(\theta_j)=\Pi_k(\theta_{j+1})(e_0,\Phi_{j+1})A_{j+1}(e_0,\Phi_j)+\Pi_k(\theta_{j})(e_0,\Phi_{j})B_{j}(e_0,\Phi_j)\\+\Pi_k(\theta_{j-1})(e_0,\Phi_{j-1})A_{j}^\ast(e_0,\Phi_j). 
\end{multline}
We conclude the section by presenting the explicit expressions in the $2\times 2$ case. The alternant matrix $L$ defined in \eqref{eq:alternant-general} is 
\begin{equation}
    L(j)=\begin{pmatrix}
h_{0}^{-\frac{1}{2}}&h_{0}^{-\frac{1}{2}}\\
h_{1}^{-\frac{1}{2}}M_1(2j;c,\beta)&h_{1}^{-\frac{1}{2}}M_1(2j+1;c,\beta)
\end{pmatrix},
\end{equation}
and its inverse is 
\begin{equation}
    L(j)^{-1}=\frac{c\beta}{c-1}\begin{pmatrix}
h_{0}^{\frac{1}{2}}M_1(2j+1;c,\beta)&-h_{1}^{\frac{1}{2}}\\
-h_{0}^{\frac{1}{2}}M_1(2j;c,\beta)&h_{1}^{\frac{1}{2}}
\end{pmatrix}.
\end{equation}
The weight function is $W(j)=L(j)\diag(w_{2j},w_{2j+1})L(j)^\ast.$ The matrix $A_k$ on the recurrence relation and difference equation is given by
\begin{equation}
    A_{k}=\begin{pmatrix}
\frac{\sinh(a)^2}{4}\rho^-_{2 k} \rho^-_{2 k-1} & -\frac{\sinh(a)\cosh(a)}{2}\rho^-_{2 k}(\lambda_{2 k}+\lambda_{2 k -1})  
\\
0 & \frac{\sinh(a)^2}{4}\rho^-_{2 k}\rho^-_{2 k +1}
\end{pmatrix},
\end{equation}
and the matrix $B_k$ is given by
\begin{equation}
    B_k=\begin{pmatrix}
\frac{\sinh(a)^2}{4}( \rho_{2 k}^{-2} + \rho _{2 k+1}^{-2}) +\cosh(a)^2 \lambda_{2 k}^{2}& -\frac{\sinh(a)\cosh(a)}{2}(\lambda_{2 k} \rho_{2 k+1}^- + \rho_{2 k+1}^-\lambda_{2 k +1})
\\
- \frac{\sinh(a)\cosh(a)}{2}(\lambda_{2 k} \rho_{2 k+1}^- + \rho_{2 k+1}^-\lambda_{2 k +1}) & \frac{\sinh(a)^2}{4}( \rho_{2 k +1}^{-2}+\rho_{2 k+2}^{-2})+\cosh(a)^{2} \lambda_{2 k +1}^{2}
\end{pmatrix},
\end{equation}
where $
\lambda_k=\frac{\beta}{2}+k$ and $ \rho^-_k=\sqrt{k(\beta+k-1)}$.
Finally, the MVOPs are given by $\Pi_k(\theta_j)=R_k(j)L(j)^{-1},$ where
\begin{equation}
    R_k(j)=\begin{pmatrix}
M_{2k}(2j;c,\nu)h_{2k}^{-\frac{1}{2}}&M_{2k}(2j+1;c,\nu)h_{2k}^{-\frac{1}{2}}\\
M_{2k+1}(2j;c,\nu)h_{2k+1}^{-\frac{1}{2}}&M_{2k+1}(2j+1;c,\nu)h_{2k+1}^{-\frac{1}{2}}
\end{pmatrix},
\end{equation}
where $h_k$ is defined in \eqref{eq:weight-norm-scalar-meixner}.

\begin{remark}
In both the Krawtchouk and Meixner analogues, extensive symbolic computations indicate that the weight matrices $W$ are
irreducible, see \cite{TZreducibility} and \cite{KRreducibility}. A complete proof of this fact for the $2\times2$ case can be carried out applying \cite[Theorem 2.3]{TZreducibility}.
\end{remark}
%%%%%%%%%%%%%%%%%%%%%%%%%%%%%%%%%
%%%%%%%%%%%%%%%%%%%%%%%%%%%%%%%%%
%%%%%%%%%%%%%%%%%%%%%%%%%%%%%%%%%
%%%%%%%%%%%%%%%%%%%%%%%%%%%%%%%%%
\section{Outlook}
In this article we were able to develop a general method to obtain matrix analogs of scalar valued orthogonal polynomials and to apply it successfully on various examples. An interesting feature is that for the same algebra, here $\mathfrak{su}(2)$, we were able to produce two non equivalent examples. In this work, we provided an explicit formula for $\Pi_k(\theta_j)$ as a function in $j$. A future development would be to produce a formula expanding our polynomials $\Pi_k$ in the $X^k$ basis. This raises the question of the classification of MVOPs. In the case of finite families of MVOPs satisfying a difference equation one approach could be a generalization of Leonard pairs in the context of $M_n(\dC)$-modules as developed in Section $2$. It would also be quite interesting to find applications of these MVOPs in models of interest in physics.

\subsection*{Conflict of interest:} On behalf of all authors, the corresponding author states that there is no conflict of interest.

\subsection*{Use of generative AI:} On behalf of all authors, the corresponding author states that generative AI tools were not used for this article.

\subsection*{Data availability statement:} The authors declare that the data supporting the findings of this study are available within the paper.

\bibliographystyle{plain} 

\bibliography{Bibliography} 
\end{document}